\documentclass[11pt]{amsart}
\usepackage[a4paper,margin=3cm]{geometry}
\usepackage[pdftex,pdfborder={0 0 0},colorlinks]{hyperref}
\usepackage{latexsym,lmodern,graphicx,enumitem,amssymb}
\usepackage{epsfig,psfrag}

\newtheorem{thm}[equation]{Theorem}
\newtheorem{lem}[equation]{Lemma}
\newtheorem{prop}[equation]{Proposition}
\newtheorem{cor}[equation]{Corollary}

\newtheorem{rem}[equation]{Remark}
\numberwithin{equation}{section}


\newcommand{\R}{\mathbb{R}}

\renewcommand{\P}{\mathbb{P}}
\newcommand{\E}{\mathbb{E}}
\newcommand{\ep}{\varepsilon}
\newcommand{\Var}{\mathrm{Var}}

\begin{document}

\title[Transport proofs of weighted Poincar\'e  inequalities]
{Transport proofs of weighted  Poincar\'e inequalities for log-concave distributions}
\author{Dario Cordero-Erausquin and Nathael Gozlan}

\date{\today}

\address{
DCE: Institut de Mathématiques de Jussieu and Institut Universitaire de France, 
Universit\'e Pierre et Marie Curie -- Paris 6,  4, place Jussieu,  75252 Paris Cedex 05,France }
\email{dario.cordero@imj-prg.fr}
\address{
NG: Universit\'e Paris Est Marne la Vall\'ee - Laboratoire d'Analyse et de Math\'e\-matiques Appliqu\'ees (UMR CNRS 8050), 5 bd Descartes, 77454 Marne la Vall\'ee Cedex 2, France}
\email{nathael.gozlan@univ-mlv.fr}

\keywords{Transport inequalities, weighted Poincar\'e inequalities}
\subjclass{60E15, 32F32 and 26D10}
\thanks{This work was partially supported by the grants ANR 2011 BS01 007 01 (GeMeCOD) and ANR 10 LABX 58}

\maketitle

 \begin{abstract}We prove, using optimal transport tools, weighted Poincar\'e inequalities for log-concave random vectors satisfying some centering conditions. We recover by this way similar results by Klartag and Barthe-Cordero-Erausquin for log-concave random vectors with symmetries. In addition, we prove that the variance conjecture is true for increments of log-concave martingales.
\end{abstract}

\section{Introduction}
In all the paper, if $X=(X_1, \ldots,X_n)$ is a random vector defined on some probability space $(\Omega,\mathcal{A},\P)$ with values in $\R^n$ and $h:\R^n\to\R$ is an Borel (bounded or nonnegative) function, we use the following notation for the conditional expectations: 
\[
\E_{i}[h(X)]:= \E[h(X)|X_1,\ldots,X_{i-1}],
\] 
with the convention that $\E_0[h(X)]=\E[h(X)].$ To any random vector $X$, we associate the random vector $\overline{X}$ defined as follows:
\[
\overline{X}_i = X_i-\E_{i-1}[X_i],\qquad \forall i \in \{1,\ldots,n\}.
\]
This recentering procedure will play an important role in all the paper (see also \cite{BGRS13} for another application). We aim at proving Poincar\'e and transport inequalities for $\overline X$, when $X$  is log-concave. 

\smallskip
Recall that a random vector $X$ with values in $\R^n$ is log-concave if for all non-empty compact sets $A,B \subset \R^n$, it holds
\[
\P(X \in (1-t)A + tB) \geq \P(X\in A)^{1-t}\,\P(X\in B)^t,\qquad \forall t\in [0,1].
\]
According to a celebrated result of Borell \cite{Bor74,Bor75}, a random vector $X$ is log-concave if and only if there is an affine map $\ell : \R^k \to \R^n$, $k\leq n$ and a random vector $Y$ taking values in $\R^k$ such that $X=\ell (Y)$ and $Y$ has a density of the form $e^{-V}$ with respect to the Lebesgue measure on $\R^k$, where $V:\R^k\to \R\cup\{+\infty\}$ is a convex function. In what follows, by an ``$n$-dimensional log-concave random vector'', we will understand a vector $X$ satisfying the conditions above with $k=n$ (and $\ell =\mathrm{Id}$).
\smallskip

The main result of this note is that the class of all random vectors $\overline{X}$ with $X$ $n$-dimensional and log-concave satisfies a general weighted Poincar\'e inequality.
\begin{thm}\label{main result}
There exists a numerical constant $a>0$ such that for any $n$-dimensional log-concave random vector $X$, it holds
\begin{equation}\label{eq:wP}
\Var(f(\overline{X}))\leq a \sum_{i=1}^n \E\left[ \E[\overline{X}_i^2|\overline{X}_1,\ldots,\overline{X}_{i-1}] \partial_i f(\overline{X})^2\right] 
\end{equation}
for all locally-Lipschitz $f:\R^n\to \R$ belonging to $\mathbb{L}_2(\overline X)$, where $\Var(Y):= \E[Y^2]-\E[Y]^2$ denotes the variance of a real valued random variable $Y.$
In particular, if $X$ is such that $\E_{i-1}[X_i]=0$ for all $i\in\{1,\ldots,n\}$, then $\overline{X}=X$ and it holds
\begin{equation}\label{eq:wP2}
\Var(f(X))\leq a \sum_{i=1}^n \E\left[ \E_{i-1}[X_i^2] \partial_i f(X)^2\right] 
\end{equation}
\end{thm}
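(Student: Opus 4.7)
My plan is to combine Bobkov's one-dimensional Poincar\'e inequality for log-concave measures with a martingale variance decomposition along the natural filtration, using optimal transport to compare the derivative of a conditional expectation with $\partial_i f$.

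First, by convolving $X$ with a small independent Gaussian $\varepsilon G$ and passing to the limit $\varepsilon\to 0$, I reduce to the case where $X$ has a smooth, strictly positive, log-concave density on $\R^n$. Set $\mathcal{F}_i := \sigma(X_1,\ldots,X_i) = \sigma(\overline X_1,\ldots,\overline X_i)$. By Pr\'ekopa-Leindler the conditional law of $X_i$ given $\mathcal{F}_{i-1}$ is log-concave on $\R$, hence so is that of $\overline X_i = X_i - \E_{i-1}[X_i]$, which is centered with variance $\sigma_i^2 := \E[\overline X_i^2 \mid \mathcal{F}_{i-1}]$. Bobkov's 1D Poincar\'e inequality for log-concave measures therefore yields a universal $K$ such that
\[
\Var\bigl(g(\overline X_i) \mid \mathcal{F}_{i-1}\bigr) \leq K \sigma_i^2 \, \E\bigl[g'(\overline X_i)^2 \mid \mathcal{F}_{i-1}\bigr]
\]
for every smooth $g: \R \to \R$. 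This is the one-dimensional building block.

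Tensorization is then achieved via the martingale decomposition: setting $M_i := \E[f(\overline X) \mid \mathcal{F}_i]$, the increment $M_i - M_{i-1}$ is, conditional on $\mathcal{F}_{i-1}$, a centered function of the single variable $\overline X_i$, and orthogonality gives
\[
\Var(f(\overline X)) = \sum_{i=1}^n \E\bigl[(M_i - M_{i-1})^2\bigr] \leq K \sum_{i=1}^n \E\bigl[\sigma_i^2 \, (\partial^{*}_i m_i)^2(\overline X_{\leq i})\bigr],
\]
where $m_i(\overline x_{\leq i}) := \E[f(\overline X) \mid \overline X_{\leq i} = \overline x_{\leq i}]$ and $\partial^{*}_i$ denotes the partial derivative in the $i$-th variable.

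The main obstacle, and where optimal transport enters, is to replace $(\partial^{*}_i m_i)^2$ by a quantity controlled by $\E[(\partial_i f)^2 \mid \mathcal{F}_i]$. A direct calculation shows that $\partial^{*}_i m_i$ equals $\E[\partial_i f \mid \mathcal{F}_i]$ plus a covariance term reflecting the dependence of the conditional density of $\overline X_{>i}$ given $\overline X_{\leq i}$ on $\overline x_i$. To handle this, for fixed $\overline X_{<i}$ and two nearby values $\overline x_i, \overline x_i'$, I intend to use the Brenier (or triangular Knothe-Rosenblatt) transport map between the conditional laws $\mathcal{L}(\overline X_{>i} \mid \overline X_{<i}, \overline X_i = \overline x_i)$ and $\mathcal{L}(\overline X_{>i} \mid \overline X_{<i}, \overline X_i = \overline x_i')$, whose displacement cost is controlled by Pr\'ekopa from the log-concavity of the joint density of $X$. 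Differentiating $m_i$ along this family of transports at $\overline x_i' = \overline x_i$ rewrites $\partial^{*}_i m_i$ in terms of an integral of $\partial_j f$ for $j \geq i$, with the cross-terms $(j > i)$ weighted by the transport velocity field. These cross-terms are then reabsorbed into the contributions of later levels by a downward induction on $i$. Carrying out this absorption so that the resulting constant $a$ remains numerical (independent of $n$) is the delicate step.
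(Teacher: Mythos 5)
Your plan takes a genuinely different route from the paper's. The paper never uses the martingale variance decomposition: it first proves a global transport-entropy inequality $\mathcal T_\mu(\bar\mu,\bar\nu)\leq D(\nu\,\|\,\mu)$ (Theorem \ref{transport inequality}), obtained by applying the above-tangent lemma to the Knothe map $T$ between $\mu$ and $\nu$, combined with Bobkov--Houdr\'e's Orlicz-type strengthening of the one-dimensional Cheeger inequality; it then linearizes this inequality by a sup-convolution/Hamilton--Jacobi argument. The decisive structural feature there is that the Knothe map is triangular and coordinate-wise monotone, so the change-of-variable inequality
\[
D(\nu\,\|\,\mu)\ \geq\ \int\sum_{i=1}^n\bigl[\partial_iT_i(x)-1-\log\partial_iT_i(x)\bigr]\,\mu(dx)
\]
involves only the nonnegative diagonal derivatives $\partial_iT_i$; no off-diagonal derivatives of $T$ ever appear, and the recentering $T_i(X)\mapsto T_i(X)-\E[T_i(X)\mid X_1,\ldots,X_{i-1}]$ is used afterwards only to identify the resulting marginal with $\bar\nu$. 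Your approach, which peels coordinates one at a time via the Doob decomposition of $f(\overline X)$, runs into precisely the difficulty that the global Knothe construction is designed to avoid.

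The genuine gap is the absorption of the cross-terms in $\partial^{*}_i m_i$. You rightly observe that
\[
\partial^{*}_i m_i(\overline x_{\leq i})\ =\ \E[\partial_if(\overline X)\mid\mathcal F_i]\ +\ \Bigl(\text{a term coming from }\partial_{\overline x_i}\,\mathcal L\bigl(\overline X_{>i}\mid\overline X_{\leq i}\bigr)\Bigr),
\]
and you propose to rewrite the second term through the velocity field $v$ of a (Brenier or Knothe) flow between the conditional laws at nearby values of $\overline x_i$, and then ``reabsorb'' the resulting $\partial_jf$, $j>i$, contributions by downward induction. Two things are missing and I do not see how to supply them. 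First, nothing controls $v$: log-concavity and Pr\'ekopa guarantee that each conditional law is log-concave and, after recentering, centered, but they give no usable pointwise or $L^2$ bound on the parameter derivative of the transport map. For the one-dimensional monotone maps underlying Knothe--Rosenblatt, this velocity is $v=-\partial_{\overline x_i}F/f$ with $F,f$ the conditional CDF and density, which blows up wherever the conditional density is small; Caffarelli-type contraction for Brenier requires a Gaussian comparison that is not available here. Second, even with an $L^2$ bound of the right homogeneity on $v_j$, the downward induction has to produce a constant independent of $n$, and the sketch gives no mechanism preventing the cross-term contributions from accumulating over the $n$ levels. As it stands, the argument stops exactly at the step where the theorem becomes nontrivial (for product measures the cross-terms vanish and your scheme reduces to the classical tensorization), and filling that hole would in effect require re-deriving the transport inequality that the paper proves directly.
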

\begin{rem}
If the operation $X \mapsto \overline{X}$ was preserving log-concavity then, of course, \eqref{eq:wP} would follow from \eqref{eq:wP2} applied to $\overline{X}.$ It is not difficult to find examples of log-concave random vectors $X$ such that $\overline{X}$ is not log-concave anymore. A random vector $X$ such that $\overline{X}=X$ can be interpreted as a sequence of martingale increments (see Section 3 for more details).
\end{rem}

Theorem \ref{main result} is reminiscent of recent results of Klartag \cite{Kla13} and of Barthe and Cordero-Erausquin \cite{BCE13} which were based on $\mathbb{L}_2$ methods.  The objective of this note is to give alternative proofs of variants of some of the results from \cite{Kla13,BCE13} using mass transport arguments. 
\smallskip

Recall that a random vector $X$ is \emph{unconditional} when $X=(X_1,\ldots, X_n)$ has the same law as $(\ep_1X_1,\ldots,\ep_n X_n)$ for any choice of $\ep_i=\pm 1$. Since unconditional random vectors satisfy $\E_{i-1}[X_i]=0$ for all $1\leq i\leq n$, Theorem \ref{main result} can be seen as an extension of the following result by Klartag \cite{Kla13}: for any log-concave and unconditional random vector $X$, it holds
\begin{equation}\label{eq:KlartagWP}
\Var(f(X))\leq c \sum_{i=1}^n \E\left[ (X_i^2 +\E[X_i^2])\partial_i f(X)^2\right],
\end{equation}
for all $f:\R^n\to\R$ smooth enough, where $c>0$ is some absolute constant. 
Moreover, when $f$ is itself unconditional (i.e $f(\ep_1x_1,\ldots,\ep_n x_n) = f(x_1,\ldots,x_n)$ for all $\ep_i=\pm 1$), then the terms $\E[X_i^2]$ can be removed from the right hand side of \eqref{eq:KlartagWP}. Note that in \cite{Kla13}, Klartag also obtains weighted Poincar\'e inequalities for a larger class of unconditional distributions with a density of the form $e^{-\phi}$ with $\phi:\R^n\to\R$ whose restriction to $\R_+^n$ is $p$ convex (i.e $x\mapsto \phi(x_1^{1/p},\ldots,x_n^{1/p})$ is convex). Inequalities of the form \eqref{eq:KlartagWP} were also  investigated in details in the recent paper \cite{BCE13}. There, the authors  establish general weighted Poincar\'e inequalities for classes of probability measures invariant by a subgroup of isometries, not only the coordinate reflections. \smallskip

Note that~\eqref{eq:wP2}  applies to random vectors having less symmetries than unconditional random vectors. For instance, if the $X_i$ are independent mean zero and variance one log-concave random variables then $\E_{i-1}[X_i]=0$ for all $i$, whereas $X$ does not have any particular symmetry. In this case,  the conclusion \eqref{eq:wP2} of Theorem \ref{main result} is consistent with the Poincar\'e inequality obtained using the (elementary) tensorisation property of the Poincar\'e inequality.


Theorem \ref{main result} also easily implies some variance estimates for log-concave random vectors.
\begin{cor}\label{cor:variance}
There exists a universal constant $b>0$ such that if $X$ is an $n$-dimensional log-concave random vector, then, denoting by $|\,\cdot\,|$ the standard Euclidean norm on $\R^n$, it holds
\begin{equation}\label{variance estimate0}
\Var\left(|\overline{X}|^2\right) \leq b \sum_{i=1}^n \E[\overline{X}_i^4]\leq 16b \sum_{i=1}^n \E[X_i^4].
\end{equation}
In particular, when  $\E[X_i^2]=1$ for all $i\in \{1,\ldots,n\}$, we have
\begin{equation}\label{variance estimate01}
\Var\left(|\overline{X}|^2\right) \le c n
\end{equation}
and  if in addition $X$ satisfies $\E_{i-1}\left[X_i\right]=0$ for all $i$, then
\begin{equation}\label{variance estimate2}
\Var\left(|X|^2\right) \leq c n,
\end{equation}
for some other universal constant $c.$
\end{cor}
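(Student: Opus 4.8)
The plan is to apply Theorem~\ref{main result} to the specific function $f(x)=|x|^2=\sum_{i=1}^n x_i^2$. This $f$ is smooth, hence locally Lipschitz, and it belongs to $\mathbb{L}_2(\overline X)$ because log-concave random vectors have finite moments of all orders (so $\E[|\overline X|^4]<\infty$, which also ensures all the quantities below are finite). Since $\partial_i f(x)=2x_i$, inequality~\eqref{eq:wP} reads
\[
\Var(|\overline{X}|^2)\le 4a\sum_{i=1}^n\E\left[\E[\overline{X}_i^2|\overline{X}_1,\ldots,\overline{X}_{i-1}]\,\overline{X}_i^2\right].
\]
Each summand is then simplified by conditioning on $\overline{X}_1,\ldots,\overline{X}_{i-1}$: the factor $\E[\overline{X}_i^2|\overline{X}_1,\ldots,\overline{X}_{i-1}]$ is measurable with respect to that $\sigma$-algebra, so the summand equals $\E[(\E[\overline{X}_i^2|\overline{X}_1,\ldots,\overline{X}_{i-1}])^2]$, which by the conditional Jensen inequality is at most $\E[\overline X_i^4]$. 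This already gives $\Var(|\overline X|^2)\le 4a\sum_i\E[\overline X_i^4]$, i.e. the first inequality in~\eqref{variance estimate0} with $b=4a$.

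For the second inequality in~\eqref{variance estimate0}, I would pass from $\overline X_i$ back to $X_i$ using $\overline X_i=X_i-\E_{i-1}[X_i]$ together with the elementary convexity bound $(u-v)^4\le 8(u^4+v^4)$ and the conditional Jensen inequality $(\E_{i-1}[X_i])^4\le\E_{i-1}[X_i^4]$; taking expectations yields $\E[\overline X_i^4]\le 16\,\E[X_i^4]$.

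To obtain~\eqref{variance estimate01}, the extra ingredient is that each one-dimensional marginal $X_i$ of an $n$-dimensional log-concave vector is a log-concave real random variable (by Pr\'ekopa--Leindler, or directly from the structure theorem of Borell recalled above). The reverse H\"older inequality for log-concave measures then provides a universal constant $C$ with $\E[X_i^4]\le C(\E[X_i^2])^2$, which equals $C$ under the normalization $\E[X_i^2]=1$; summing over $i$ gives $\Var(|\overline X|^2)\le 16bC\,n$, which is~\eqref{variance estimate01}. Finally~\eqref{variance estimate2} is immediate, since under the extra assumption $\E_{i-1}[X_i]=0$ one has $\overline X=X$, and hence $\Var(|X|^2)=\Var(|\overline X|^2)\le cn$.

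I do not expect a genuine obstacle: this corollary is essentially bookkeeping built on Theorem~\ref{main result}. The only two points requiring a little care are the identity that simplifies the conditional-expectation weights (where the additive structure of $|x|^2$ is used in an essential way, so that no "cross terms" appear) and invoking the moment-comparison inequality for log-concave marginals with an \emph{absolute} constant.
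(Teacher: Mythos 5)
Your proof is correct and follows essentially the same path as the paper: apply Theorem~\ref{main result} to $f(x)=|x|^2$, use measurability to turn the weight into a square of a conditional expectation, bound it by $\E[\overline X_i^4]$ via conditional Jensen, pass from $\overline X_i$ to $X_i$ with the elementary $L^4$ bound, and invoke Borell's reverse H\"older inequality for log-concave marginals. The paper's argument is identical modulo notation, so no comparison is needed.
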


\smallskip

The inequality \eqref{variance estimate2} on the variance immediately yields to the following concentration in a thin shell estimate
\[
\P\left( \left||X| - \sqrt{n} \right| \geq t \sqrt n \right)\leq  b\,  e^{-c\,  n^{1/4} \sqrt t},\qquad \forall t>0.
\]
This type of concentration inequalities plays a central role in the proof of the central limit theorem for log-concave random vectors \cite{ABP03,Kla07,Kla10,Bob03}.
\smallskip

Corollary \ref{cor:variance} is also motivated by the so called \emph{variance conjecture}. 
Recall that a random vector $X$ is said \emph{isotropic} if $\E[X]=0$ and $\E[X_iX_j]=\delta_{i,j}$ for all $i,j \in \{1,\ldots,n\}.$ The variance conjecture asserts that any log-concave and isotropic random vector $X$ satisfies \eqref{variance estimate2} for some universal positive  constant $b$. This conjecture was shown to be true in restriction to the class of unconditional log-concave random vectors by Klartag \cite{Kla09,Kla13}. We refer to \cite{BCE13} and \cite{A-GB13} for other subclasses of log-concave distributions satisfying the variance conjecture. The best (dimensional) estimate in date is due to Gu\'edon and Milman \cite{GM11} who proved that $\Var(|X|)\leq bn^{2/3}$ for any isotropic log-concave random vector $X$. The variance conjecture is a weak form of a celebrated conjecture by Kannan, Lovasz and Simonovits \cite{KLS95} stating that 
any log-concave and isotropic random vector $X$ satisfies a Poincar\'e inequality
\[
 \Var(f(X)) \leq a\E\left[ |\nabla f|^2(X)\right],\qquad \forall f \text{ smooth enough},
\]
for some universal constant $a>0.$ According to a remarkable recent result of Eldan \cite{Eld13}, the variance conjecture implies the KLS conjecture up to some $\log(n)$ factor.
\smallskip

Corollary \ref{cor:variance} thus shows that the variance conjecture is satisfied on the class of isotropic log-concave random vectors such that $\overline{X}=X$ (see also \cite[Theorem 4]{BCE13} and Remark \ref{rem BCE} below for a related result). It is not difficult to see that this class is strictly larger than the class of unconditional isotropic and log-concave random vectors (some informations on log-concave random vectors such that $\overline{X}=X$ can be found in Proposition \ref{propsym} and Remark \ref{remsym} below).
For general log-concave random vectors $X$, let us mention that it is always at least possible to bound $\Var(|X|^2)$ in terms of $\Var(|\overline{X}|^2)$ and of $\Var(|X'|^2)$, where the ``reduced'' random vector $X'$ is defined by
\[X'_i = \E_{i-1}[X_i],\qquad \forall i \in \{1,\ldots,n\}.\]
The basic observation behind the following elementary result is that $X=\overline{X}+X'$ is an orthogonal decomposition of $X$ in the space $H:=\mathbb{L}_2(\Omega,\mathcal{A},\P ; \R^n)$ of square integrable $n$-dimensional random vectors. More precisely, for any $X \in H$, the vector $\overline{X}$ is the orthogonal projection of $X$ onto the linear subspace $H_0(X)=\{Y \in H ; \E[Y_i|X_1,\ldots,X_{i-1}]=0,\  \forall i\in \{1,\ldots,n\}\}$ (the space of random sequences that are matingale increments with respect to the filtration $\sigma(X_1,\ldots,X_i)$, $0\leq i \leq n-1$.)  We will prove the following useful identity
\begin{multline}
\Var(|X|^2) = \Var(|\overline{X}|^2) + \Var(|X'|^2) + 2 \mathrm{Cov}(|\overline{X}|^2,|X'|^2) \\
+ 4 \E[(\overline{X}\cdot X')^2] + 4\E[|\overline{X}|^2\overline{X}\cdot X'] + 4\E[|X'|^2\overline{X}\cdot X'],
\label{identity}
\end{multline}
from which one deduces the following result:
%
\begin{cor}\label{encadrement}
If $X$ is an isotropic and log-concave random vector in $\R^n$, and $X'$ is defined as above, then 
\[
\Var\left(|X|^2\right) \leq a\left(n + \Var\left(|X'|^2\right) \right)\qquad\text{and}\qquad \Var\left(|X'|^2\right)\leq a\left(n+\Var\left(|X|^2\right)\right),
\]
for some universal constant $a.$
\end{cor}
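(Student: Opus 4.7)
The plan is to derive both inequalities from the identity \eqref{identity} by controlling its four cross terms. Set $V_1:=\Var(|\overline{X}|^2)$, $V_2:=\Var(|X'|^2)$ and $V_3:=\Var(|X|^2)$; by isotropy and Corollary \ref{cor:variance}, $V_1\leq cn$, so everything reduces to the cross terms.

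The key estimate is $\E[(\overline{X}\cdot X')^2]\leq Kn$ for some universal $K$. Since for $i<j$ the product $\overline{X}_i X'_i X'_j$ is $\sigma(X_1,\ldots,X_{j-1})$-measurable and $\E_{j-1}[\overline{X}_j]=0$, every off-diagonal term in the expansion of $\bigl(\sum_i \overline{X}_i X'_i\bigr)^2$ vanishes, leaving
\[
\E[(\overline{X}\cdot X')^2]=\sum_{i=1}^n \E\bigl[\overline{X}_i^2 (X'_i)^2\bigr].
\]
Each marginal $X_i$ is one-dimensional log-concave with $\E[X_i]=0$ and $\E[X_i^2]=1$, so Borell's lemma gives $\E[X_i^4]\leq C_0$ for a universal $C_0$. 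Jensen's inequality then yields $\E[(X'_i)^4]\leq \E[X_i^4]\leq C_0$; expanding $\overline{X}_i=X_i-X'_i$ gives $\E[\overline{X}_i^4]\leq 16C_0$; and Cauchy--Schwarz yields $\E[\overline{X}_i^2(X'_i)^2]\leq 4C_0$, whence $\E[(\overline{X}\cdot X')^2]\leq 4C_0\, n$.

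For the remaining cross terms, using $\E[\overline{X}\cdot X']=0$, Cauchy--Schwarz gives
\[
|\mathrm{Cov}(|\overline{X}|^2,|X'|^2)|\leq \sqrt{V_1 V_2},\quad |\E[|\overline{X}|^2\,\overline{X}\cdot X']|\leq \sqrt{V_1\, Kn},\quad |\E[|X'|^2\,\overline{X}\cdot X']|\leq \sqrt{V_2\, Kn}.
\]
Combined with $V_1\leq cn$ and the AM--GM estimate $\sqrt{nV_2}\leq \epsilon V_2 + n/(4\epsilon)$ with $\epsilon$ sufficiently small, plugging these into \eqref{identity} gives
\[
|V_3-V_1-V_2|\leq \tfrac{1}{4}V_2 + K'' n,
\]
from which both $V_3\leq a(n+V_2)$ and $V_2\leq a(n+V_3)$ follow at once. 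The main subtlety is this last bookkeeping: in order to absorb $V_2$ on the left-hand side when proving the second inequality, the coefficient of $V_2$ on the right must be made strictly less than one, which forces the $\epsilon$-AM--GM step rather than the naive $2\sqrt{ab}\leq a+b$. Everything else (Corollary \ref{cor:variance}, Borell's moment bound, and the martingale identity $\E_{i-1}[\overline{X}_i]=0$) enters as a black box.
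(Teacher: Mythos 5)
Your proof is correct and follows essentially the same route as the paper: the identity \eqref{identity}, vanishing of off-diagonal terms in $\E[(\overline{X}\cdot X')^2]$ via the martingale property $\E_{j-1}[\overline{X}_j]=0$, Cauchy--Schwarz on the three cross terms, and Borell's reverse H\"older inequality on the one-dimensional marginals. The only cosmetic differences are your Cauchy--Schwarz bound $\E[\overline{X}_i^2 (X'_i)^2]\leq 4C_0$ on fourth moments in place of the paper's direct conditional computation $\E[\overline{X}_i^2 (X'_i)^2]=\E\big[\E_{i-1}[X_i^2]\E_{i-1}[X_i]^2-\E_{i-1}[X_i]^4\big]\leq\E[X_i^4]$, and your $\epsilon$-Young absorption of $\Var(|X'|^2)$ in place of the paper's square-completion followed by solving a quadratic inequality in $\sqrt{\Var(|X'|^2)}$; both variants are equally valid and deliver the same universal constants up to renaming.
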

It follows that the variance conjecture is (technically) equivalent to the existence of a universal constant $b>0$ such that for any isotropic log-concave random vector $X$, 
\[
\Var\left(|X'|^2\right) \leq bn.
\] 
It would be of some interest to see if for some specific classes of vectors $X$, the variance term $\Var(|X'|^2)$ can be estimated by some power of $n$.

\smallskip

The proof of Theorem \ref{main result} is based on mass transport. More precisely, we will establish a transport-entropy inequality (Theorem~\ref{transport inequality}) which is of independent interest, of the form
$$ \mathcal{T}_\mu(\overline{\mu},\overline{\nu}) \leq D(\nu\,\|\,\mu),\qquad \forall \nu,$$
where $\overline{\mu}$ and $\overline{\nu}$ are the laws of random vectors $\overline{X}$ and $\overline{Y}$, with $X,Y$ distributed according to $\mu$ and $\nu$. The optimal transport cost $\mathcal{T}_\mu$ will be of the form
\[
\mathcal{T}_\mu(\nu_0,\nu_1) = \inf _{\pi \in C(\nu_0,\nu_1)} \iint c_\mu(x,y)\,\pi(dxdy),
\]
for a particular  cost function $c_\mu$  (precise definitions will be given later).
Then,  Theorem~\ref{main result} will follow from this transport-entropy inequality by a standard linearization procedure.  The argument towards our transport inequality will use an  \emph{above tangent lemma}   introduced by Cordero-Erausquin \cite{CE02} which is a handy tool to prove classical functional inequalities (Log-Sobolev, Talagrand) for uniformly log-concave random vectors and to recover the celebrated HWI inequality of Otto and Villani \cite{OV00}.  

Let us mention here a byproduct of this approach in terms of transport inequalities involving the classical $W_2$ distance (definitions are recalled below).
\begin{thm}\label{thm:transport T2}
There exists a universal constant $c$ such that for any $n$ dimensional log-concave random vector $X$ taking values in the hypercube $[-R,R]^n$, $R>0$, it holds
\[
W_2^2(\overline{\mu},\overline{\nu}) \leq c R^2 D(\nu\,\|\, \mu),
\] 
for all probability measures $\nu$ on $\R^n$, where $\overline{\mu}$ and $\overline{\nu}$ denote respectively the laws of $\overline{X}$ and $\overline{Y}$, $Y$ being distributed according to $\nu$.
\end{thm}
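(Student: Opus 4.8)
The plan is to derive Theorem~\ref{thm:transport T2} from the transport-entropy inequality $\mathcal{T}_\mu(\overline{\mu},\overline{\nu})\le D(\nu\,\|\,\mu)$ announced in the introduction and proved below as Theorem~\ref{transport inequality}, by showing that under the boundedness assumption on $X$ the cost $c_\mu$ dominates $R^{-2}|x-y|^2$ on the region that matters, up to a universal factor. First, one may assume $D(\nu\,\|\,\mu)<+\infty$, since otherwise there is nothing to prove; then $\nu\ll\mu$, so $\nu$ is also supported in $[-R,R]^n$, the vector $Y\sim\nu$ is bounded and $\overline{Y}$ is well defined. Since $X_i,Y_i\in[-R,R]$ we have $\E_{i-1}[X_i],\E_{i-1}[Y_i]\in[-R,R]$, hence $\overline{X}_i$ and $\overline{Y}_i$ take values in $[-2R,2R]$, so $\overline{\mu}$ and $\overline{\nu}$ are both carried by the cube $Q:=[-2R,2R]^n$. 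Moreover, writing $w_i(x_1,\dots,x_{i-1}):=\E[\overline{X}_i^2\mid \overline{X}_1=x_1,\dots,\overline{X}_{i-1}=x_{i-1}]$ for the conditional weights appearing in \eqref{eq:wP}, the tower property together with the inclusion $\sigma(\overline{X}_1,\dots,\overline{X}_{i-1})\subset\sigma(X_1,\dots,X_{i-1})$ and the elementary bound on the variance of a $[-R,R]$-valued random variable (Popoviciu) yield $w_i(\,\cdot\,)\le R^2$ pointwise.

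The crux is the explicit structure of the cost $c_\mu$ delivered by the argument behind Theorem~\ref{transport inequality}: $c_\mu$ is built so that its linearization gives back exactly the weighted Poincar\'e inequality \eqref{eq:wP}, and it is of the separated form
\[
c_\mu(x,y)=\sum_{i=1}^n w_i(x_1,\dots,x_{i-1})\,\alpha\!\left(\frac{y_i-x_i}{w_i(x_1,\dots,x_{i-1})}\right),
\]
where $\alpha\ge 0$ is a fixed convex function with $\alpha(0)=\alpha'(0)=0$ that vanishes to exactly second order at the origin (with the convention that a term equals $0$ when $w_i=0=y_i-x_i$ and $+\infty$ when $w_i=0\neq y_i-x_i$). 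Feeding in the bound $w_i(\,\cdot\,)\le R^2$, the bound $|x_i-y_i|\le 4R$ valid on $Q$, and the quadratic-near-zero behaviour together with the convexity of $\alpha$ — and noting that the coordinates with $w_i=0$ are harmless, since there $\overline{X}_i=0$ $\overline\mu$-a.s.\ and, because $\nu\ll\mu$, also $\overline{Y}_i=0$ $\overline\nu$-a.s., so they contribute $0$ to both sides — one obtains a universal constant $\beta>0$ with
\[
c_\mu(x,y)\ \ge\ \frac{\beta}{R^2}\,|x-y|^2\qquad\text{for all }x,y\in Q.
\]

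Since any coupling of $\overline{\mu}$ and $\overline{\nu}$ is carried by $Q\times Q$, this pointwise bound gives $\mathcal{T}_\mu(\overline{\mu},\overline{\nu})\ge \frac{\beta}{R^2}\,W_2^2(\overline{\mu},\overline{\nu})$; combining with Theorem~\ref{transport inequality} one gets $W_2^2(\overline{\mu},\overline{\nu})\le \beta^{-1}R^2\,D(\nu\,\|\,\mu)$, which is the assertion with $c=\beta^{-1}$. I expect the main obstacle to be the middle paragraph: extracting from the construction of Theorem~\ref{transport inequality} the precise coordinatewise form of $c_\mu$ and the quadratic lower bound on its shape function — that is, verifying that the cost is at least quadratic in each coordinate, so that small conditional variances $w_i$ only help rather than hurt — together with a clean treatment of the degenerate coordinates $w_i=0$. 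Everything else (the reduction to compactly supported $\nu$, Popoviciu's bound $w_i\le R^2$, and the passage from a pointwise cost estimate to the $W_2$ inequality) is routine.
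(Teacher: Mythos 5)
Your overall strategy is exactly the paper's: start from the transport--entropy inequality $\mathcal{T}_\mu(\overline\mu,\overline\nu)\le D(\nu\,\|\,\mu)$, note that $D<\infty$ forces $\nu$ to live on the cube, use Popoviciu to bound the conditional variances, and then deduce a pointwise comparison of the cost $c_\mu$ with $|x-y|^2/R^2$ on the cube $Q=[-2R,2R]^n$. The reductions you carry out around this (support of $\nu$, $\overline X_i,\overline Y_i\in[-2R,2R]$, treatment of degenerate coordinates) are all fine.

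The gap is precisely the one you flagged, and it is a real one: the form you posit for the cost is not the one the paper constructs, and with your form the pointwise bound $c_\mu(x,y)\ge \beta\,|x-y|^2/R^2$ is \emph{false} with a universal $\beta$. You write $c_\mu(x,y)=\sum_i w_i\,\alpha\bigl((y_i-x_i)/w_i\bigr)$ with $w_i$ the conditional variance and $\alpha$ convex, quadratic at $0$ with at-most-linear growth at infinity (as it must be, being of $N$-type). Take a coordinate with $w_i\ll |d_i|$ where $d_i=y_i-x_i$: then $w_i\alpha(d_i/w_i)\approx |d_i|$, and $|d_i|\ge \beta d_i^2/R^2$ forces $R^2\ge\beta|d_i|$, which fails whenever $R$ is small and $|d_i|$ is of order $R$. (Reinterpreting $w_i$ as the conditional standard deviation does not rescue this; the same degeneracy occurs.) So the ``convexity plus quadratic behaviour near $0$'' heuristic does not close the argument for the separated form you guessed.

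What actually saves the day is the precise way $w_i$ enters the paper's cost, namely
\[
c_\mu(x,y)=\frac{1}{16}\sum_{i=1}^n N\bigl(\lambda_i(S(x))\,(x_i-y_i)\bigr),
\qquad N(t)=|t|-\log(1+|t|),\qquad \lambda_i^2=\frac{1}{3\,\Var(X_i\mid X_1,\dots,X_{i-1})}.
\]
Here $\lambda_i$ rescales the \emph{argument} of $N$, and two facts do all the work. First, $N$ is even and increasing on $[0,\infty)$, hence $\lambda\mapsto N(\lambda d)$ is monotone in $\lambda>0$; combined with the lower bound $\lambda_i\ge 1/(\sqrt6 R)$ on the cube (Popoviciu plus Bobkov's constant), this gives $N(\lambda_i(S(x))d_i)\ge N\bigl(d_i/(\sqrt6 R)\bigr)$ pointwise. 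Second, on $Q$ the new argument is uniformly bounded, $|d_i|/(\sqrt6 R)\le 4/\sqrt6$, so the quadratic lower bound $N(u)\ge c\,u^2$ for $|u|$ in a fixed bounded range applies and yields $N(\lambda_i d_i)\ge c\,d_i^2/(6R^2)$. Summing over $i$ gives $c_\mu(x,y)\ge \tfrac{c}{96R^2}|x-y|^2$ on $Q\times Q$, and the theorem follows exactly as in your last paragraph. Note that your intuition that ``small conditional variance only helps'' is correct, but only because in the true cost a small $w_i$ means a large $\lambda_i$ and hence a \emph{larger} value of $N(\lambda_i d_i)$, tending to $+\infty$ as $w_i\to 0$ when $d_i\neq0$ --- not, as in your guessed form, a value that saturates at $|d_i|$.
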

Theorem \ref{thm:transport T2} can be considered as a variant of results by Eldan and Klartag \cite[Theorem 6.1]{EK} and by Klartag \cite[Theorem 4.2]{Klartag-cube}. Let us mention that the present paper uses techniques of proof very similar to those involved in \cite{EK,Klartag-cube}.
To be more precise, Theorem 6.1 of \cite{EK} gives a similar inequality when $\mu$ and $\nu$ are both unconditional and log-concave. In their statement, the relative entropy is replaced by $\int_{[-R,R]^n} H(f,g)-1$, where $H(f,g)=\sup_{x\in \R^n}\sqrt{f(x+y)g(x-y)}$, denoting by $f,g$ the densities of $\mu$ and $\nu$ with respect to Lebesgue. This quantity is relevant in their study of the stability of the Brunn-Minkowski inequality. 
In Theorem 4.2 of \cite{Klartag-cube}, Klartag obtains the inequality 
\[
W_2^2(\mu,\nu) \leq c L^2 D(\nu\,\|\, \mu),\qquad \forall \nu
\]
for all log-concave probability measures $\mu$ supported on the hypercube $Q=[-1,1]^n$ and such that in addition the density $f$ of $\mu$ with respect to Lebesgue satisfies for some $L\geq 1$
\[
f((1-t) x +t y) \leq L [(1-t) f(x) +t f(y)],\qquad \forall t \in [0,1],
\]
for all $x,y \in Q$ with $x-y$ proportional to one of the standard basis vectors $e_i$. This condition is for instance realized with $L=e^{M/8}$ if $f=e^{-V}$ for some smooth convex function $V:Q\to \R$ such that $\sup_{i\leq n}\sup_{x\in Q}\partial_i^2 V(x) \leq M$ for some $M\geq 0$. 
\bigskip

The paper is organized as follows. In Section 2, we  gather various observations on the relations between $\overline{X}$ and $X$ for log-concave random vectors. In Section 3, we give some background on the mass transportation tools that are used to establish our general transport-inequality, which is stated and proved in Section 4, together with Theorem~\ref{thm:transport T2}.  Then, in Section 5  we linearize this transport-entropy inequality and establish Theorem \ref{main result}. In the final Section~6, we explain how to derive the Corollaries \ref{cor:variance} and \ref{encadrement} on the variance.


\section{Some observations about log-concave random vectors such that $\overline{X}=X$}
First, we begin with a straightforward proposition identifying the class of random vectors such that $\overline{X}=X$ as increments of martingales.
\begin{prop}
A random vector $X=(X_1,\ldots,X_n)$ is such that $\overline{X}=X$ if and only if $M=(M_0,M_1,\ldots,M_n)$, with $M_0=0$ and $M_k=\sum_{i=1}^k X_i$ is a martingale with respect to the increasing sequence of sub-sigma fields $\mathcal{F}_{k}= \sigma(M_0,\ldots,M_{k})$, $k\in \{0,\ldots,n\}.$ 
\end{prop}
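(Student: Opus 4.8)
The plan is to unwind both definitions and check they coincide term by term. Recall that $\overline{X}_i = X_i - \E_{i-1}[X_i] = X_i - \E[X_i \mid X_1,\ldots,X_{i-1}]$, so the condition $\overline{X} = X$ means precisely that $\E[X_i \mid X_1,\ldots,X_{i-1}] = 0$ for every $i \in \{1,\ldots,n\}$. On the other side, $M = (M_0,\ldots,M_n)$ with $M_0 = 0$ and $M_k = \sum_{i=1}^k X_i$ is an $\mathcal{F}$-martingale, where $\mathcal{F}_k = \sigma(M_0,\ldots,M_k)$, if and only if each $M_k$ is $\mathcal{F}_k$-measurable (automatic by construction, assuming integrability) and $\E[M_k \mid \mathcal{F}_{k-1}] = M_{k-1}$ for $k \in \{1,\ldots,n\}$; since $M_k - M_{k-1} = X_k$, this last condition is equivalent to $\E[X_k \mid \mathcal{F}_{k-1}] = 0$ for every $k$.

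The only genuine point to check — and this is the step I expect to be the main (minor) obstacle — is that conditioning on $\mathcal{F}_{k-1} = \sigma(M_0,\ldots,M_{k-1})$ is the same as conditioning on $\sigma(X_1,\ldots,X_{k-1})$. First I would observe that $M_0 = 0$ carries no information, and that the map $(x_1,\ldots,x_{k-1}) \mapsto (0, x_1, x_1+x_2, \ldots, x_1+\cdots+x_{k-1})$ is a Borel bijection onto its image with Borel inverse (the partial-sum transform and its inverse, the successive-differences transform, are both linear hence continuous). Consequently $\sigma(M_0,M_1,\ldots,M_{k-1}) = \sigma(X_1,\ldots,X_{k-1})$ as sub-sigma-fields of $\mathcal{A}$, for each $k$. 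Hence $\E_{k-1}[X_k] = \E[X_k \mid X_1,\ldots,X_{k-1}] = \E[X_k \mid \mathcal{F}_{k-1}]$, and the two conditions match.

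Putting these together: $\overline{X} = X$ $\iff$ $\E[X_i \mid X_1,\ldots,X_{i-1}] = 0$ for all $i$ $\iff$ $\E[X_i \mid \mathcal{F}_{i-1}] = 0$ for all $i$ $\iff$ $\E[M_k \mid \mathcal{F}_{k-1}] = M_{k-1}$ for all $k$ $\iff$ $M$ is an $\mathcal{F}$-martingale. (The case $i=1$ reads $\E[X_1] = 0 = \E_0[X_1]$, consistent with the stated convention $\E_0[h(X)] = \E[h(X)]$ and with $\mathcal{F}_0 = \sigma(M_0) = \{\emptyset,\Omega\}$.) Throughout one should assume, as is implicit, that $X \in \mathbb{L}_1$ so that the conditional expectations and the martingale property make sense; this is harmless since all statements in the paper are ultimately applied to (integrable, indeed log-concave) random vectors. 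I would present this as three or four lines rather than a long argument, since no estimate is involved.
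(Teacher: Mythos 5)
Your argument is correct and complete; the paper itself leaves this proof to the reader, so there is no authorial proof to compare against, but what you wrote is the natural (and essentially only) argument: unwind both definitions, note that $M_k - M_{k-1} = X_k$, and verify via the invertible linear partial-sum transform that $\sigma(M_0,\ldots,M_{k-1}) = \sigma(X_1,\ldots,X_{k-1})$. The integrability caveat you raise is the right one and is indeed implicit throughout the paper.
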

The proof if left to the reader.
\smallskip

If $M:=(M_0,M_1,\ldots,M_n)$ is a martingale, we denote by $\Delta_i = M_i-M_{i-1}$, $i\in \{1,\ldots,n\}$ the increments of $M$. The quadratic variation process of $M$ is then defined by $[M]_k = \sum_{i=0}^{k} \Delta_i^2$, for all $k \in \{0,1,\ldots,n\}.$ With these definitions, Corollary \ref{variance estimate0} can be restated as follows. 
\begin{prop}
There exists a universal constant $c>0$ such that for all martingale $M=(M_0,M_1,\ldots, M_n)$ such that $M_0=0$ and $(M_1,\ldots,M_n)$ has a log-concave density, it holds
\[
\mathrm{Var}([M]_k) \leq c \sum_{i=1}^k \E[\Delta_i^4],\qquad \forall k\leq n.
\]
\end{prop}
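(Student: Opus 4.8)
The statement is just a reformulation of Corollary~\ref{cor:variance}, so my plan is to translate the martingale data into the hypotheses of that corollary and then invoke it. First I would introduce $X=(X_1,\ldots,X_n)$ with $X_i:=\Delta_i=M_i-M_{i-1}$, so that $M_k=\sum_{i=1}^k X_i$. The linear map $(x_1,\ldots,x_n)\mapsto(x_1,\,x_1+x_2,\,\ldots,\,x_1+\cdots+x_n)$ carries $(\Delta_1,\ldots,\Delta_n)$ to $(M_1,\ldots,M_n)$; it is a bijection of $\R^n$ with determinant $1$, and log-concavity of a density is preserved in both directions under an invertible linear change of variables (it amounts to composing the convex potential with a linear map). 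Hence the assumption that $(M_1,\ldots,M_n)$ has a log-concave density is equivalent to $X$ being an $n$-dimensional log-concave random vector in the sense of the introduction. Moreover $\sigma(M_0,\ldots,M_{i-1})=\sigma(X_1,\ldots,X_{i-1})$, so the martingale identity $\E[M_i\mid\mathcal{F}_{i-1}]=M_{i-1}$ is exactly $\E_{i-1}[X_i]=0$ for every $i$; by the first Proposition of this section this says $\overline{X}=X$.

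Next I would fix $k\le n$ and observe that, since $M_0=0$,
\[
[M]_k=\sum_{i=1}^k\Delta_i^2=|(X_1,\ldots,X_k)|^2 .
\]
The sub-vector $(X_1,\ldots,X_k)$ is the law of a projection of $X$, hence it is $k$-dimensional log-concave (sub-vectors of log-concave vectors are log-concave); and it still satisfies $\E[X_i\mid X_1,\ldots,X_{i-1}]=0$ for $i\le k$, because the conditioning $\sigma$-fields are unchanged, so it coincides with its own recentering.

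Finally I would apply inequality~\eqref{variance estimate0} of Corollary~\ref{cor:variance} to the $k$-dimensional log-concave vector $(X_1,\ldots,X_k)$, which equals its recentering; this gives
\[
\Var\big([M]_k\big)=\Var\big(|(X_1,\ldots,X_k)|^2\big)\le b\sum_{i=1}^k\E[X_i^4]=b\sum_{i=1}^k\E[\Delta_i^4],
\]
so the Proposition holds with $c=b$. (All the fourth moments are finite, since log-concave random vectors have finite moments of every order.) I do not expect any real obstacle here: the only steps needing a word of justification are that the partial-sum map is a volume-preserving linear bijection, so that it transports log-concavity both ways, and that sub-vectors of log-concave vectors are log-concave — both standard facts.
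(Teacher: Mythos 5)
Your proof is correct and follows essentially the same route as the paper: pass to the increments $X_i=\Delta_i$, use that the triangular linear change of variables preserves log-concavity, note that the martingale property gives $\overline X=X$, and invoke Corollary~\ref{cor:variance}. The only difference is that you explicitly treat the case $k<n$ by applying the corollary to the sub-vector $(X_1,\ldots,X_k)$ (which is log-concave by Pr\'ekopa and still satisfies $\overline{(X_1,\ldots,X_k)}=(X_1,\ldots,X_k)$), whereas the paper leaves this step implicit in the word ``immediately.''
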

\proof
Since the class of log-concave random vectors is stable under affine transformations, it follows that $(M_1,\ldots,M_n)$ has a log-concave density if and only if $(X_1,\ldots,X_n)$ with $X_i=\Delta_i$ has a log-concave density. The result then follows immediately from Corollary \ref{variance estimate0}.
\endproof

We now collect in the following proposition some elementary informations on log-concave random vectors $X$ such that $\overline{X} = X.$
\begin{prop}\label{propsym}\ 
\begin{enumerate}
\item If $X,Y$ are two independent log-concave random vectors (defined on the same probability space) such that $\overline{X}=X$ and $\overline{Y}=Y$, then $\overline{X+Y}=X+Y.$
\item If $X$ is a log-concave random vector with values in $\R^n$ then $\overline{X}=X$ if and only if $\E[X]=0$ and for all $k\in \{1,\ldots,n-1\}$, $\E[X | X_1,\ldots, X_k] = (X_1,\ldots,X_k,0,\ldots,0).$
In particular, if $C\subset \R^n$ is a bounded convex body and $X$ is uniformly distributed over $C$, then $\overline{X} =X$ if and only if the barycenter of $C$ is at $0$ and for all $x=(x_1,\ldots,x_n) \in \R^n$
\[
\mathrm{Bar} (C\cap \{(x_1,\ldots,x_k)\} \times \R^{n-i}) = (x_1,\ldots,x_k,0\ldots,0),\qquad \forall k\in\{1,\ldots,n-1\},
\]
(whenever this section is not empty).
In particular, $C$ is symmetric with respect to the hyperplane $\{x_n=0\}.$

\item If $C\subset \R^2$ is a bounded convex body with barycenter at $0$ and $X$ is uniformly distributed over $C$, then $\overline{X}$ is uniformly distributed over the convex body $\overline{C}$ obtained from $C$ by applying Steiner symmetrization with respect to the axis $D=\R \times \{0\}.$ In particular $\overline{X}=X$ if and only if $C$ is symmetric with respect to $D$.
\end{enumerate}
\end{prop}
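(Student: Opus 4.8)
The three items rest on the same elementary mechanism, so I would handle them in turn. Recall that, by the martingale characterization given by the first Proposition of this section, $\overline Z=Z$ means exactly that the partial sums $S_k:=\sum_{i\le k}Z_i$ form a martingale for their natural filtration $\sigma(S_0,\dots,S_k)$; concretely $\overline Z_i=Z_i-\E[Z_i\mid Z_1,\dots,Z_{i-1}]$. Log-concavity plays essentially no role in (1) and (2) (it only ensures integrability, and, in the convex-body reformulations, that sections are segments); it is used in earnest only in (3). For item (1), put $S_k:=\sum_{i\le k}(X_i+Y_i)$ and enlarge the filtration to $\mathcal{G}_k:=\sigma(X_1,\dots,X_k,Y_1,\dots,Y_k)$. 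Independence of $X$ and $Y$ makes $\sigma(X_1,\dots,X_{k+1})$ independent of $\sigma(Y_1,\dots,Y_k)$, so $\E[X_{k+1}\mid\mathcal{G}_k]=\E[X_{k+1}\mid X_1,\dots,X_k]=0$ by $\overline X=X$, and symmetrically for $Y$; hence $S$ is a $(\mathcal{G}_k)$-martingale. Since $\sigma(S_0,\dots,S_k)\subseteq\mathcal{G}_k$ and $S_k$ is $\sigma(S_0,\dots,S_k)$-measurable, the tower property gives $\E[S_{k+1}\mid S_0,\dots,S_k]=S_k$, so $S$ is a martingale for its own filtration, i.e. $\overline{X+Y}=X+Y$.

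For item (2), suppose first $\overline X=X$. Then $\E[X_1]=0$, and iterating conditional expectations gives $\E[X]=0$; fixing $k$ and inducting on $j=k+1,\dots,n$ we get $\E[X_j\mid X_1,\dots,X_k]=\E\big[\E[X_j\mid X_1,\dots,X_{j-1}]\,\big|\,X_1,\dots,X_k\big]=0$, which is precisely $\E[X\mid X_1,\dots,X_k]=(X_1,\dots,X_k,0,\dots,0)$. Conversely, reading the $i$-th coordinate of that identity with $k=i-1$ yields $\E[X_i\mid X_1,\dots,X_{i-1}]=0$ for $i\ge2$, while $\E[X_1]=0$ follows from $\E[X]=0$; hence $\overline X=X$. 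When $X$ is uniform on a convex body $C$, the regular conditional law of $X$ given $(X_1,\dots,X_k)=(x_1,\dots,x_k)$ has constant density on the section $C\cap(\{(x_1,\dots,x_k)\}\times\R^{n-k})$, hence is uniform there, so $\E[X\mid X_1,\dots,X_k]$ is the barycenter of that section and the abstract characterization translates verbatim. Specializing to $k=n-1$, each nonempty section is a segment in the $x_n$-direction whose barycenter is its midpoint, so the condition forces that segment to be centered on $\{x_n=0\}$; thus $x\in C\Rightarrow(x_1,\dots,x_{n-1},-x_n)\in C$, i.e. $C$ is symmetric with respect to $\{x_n=0\}$.

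For item (3), write the section of $C$ over $x_1$ as $\{x_1\}\times[\ell(x_1),u(x_1)]$, with $\ell$ convex and $u$ concave on the interval $I$ obtained by projecting $C$ onto the first axis; set $m:=(\ell+u)/2$ and $a:=(u-\ell)/2$, so $a\ge0$ is concave and $\overline C=\{(x_1,x_2):x_1\in I,\ |x_2|\le a(x_1)\}$ is a convex body. Since $\E[X_1]=0$ and $\E[X_2\mid X_1]=m(X_1)$ (the midpoint of the conditional section, as computed above), we get $\overline X=(X_1,\ X_2-m(X_1))=T(X)$, where $T(x_1,x_2):=(x_1,x_2-m(x_1))$ is a volume-preserving shear with $T(C)=\overline C$; therefore $\overline X$ is uniform on the Steiner symmetral $\overline C$. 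Finally $\overline X=X$ iff $m(X_1)=0$ almost surely iff $m$ vanishes on the interior of $I$ (using continuity of $\ell,u$ there and positivity of the density of $X_1$) iff $C=\overline C$, i.e. $C$ is symmetric about $D$.

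The only points needing care — hardly obstacles — are measure-theoretic: identifying the conditional law of a uniform random vector given some of its coordinates with the uniform law on the corresponding section (regular conditional probabilities), and upgrading ``$m=0$ almost everywhere'' to ``$m\equiv0$'' via continuity of the boundary functions of the sections. Everything else is a direct manipulation of conditional expectations together with the elementary fact that a shear is volume-preserving and carries a planar convex body to its Steiner symmetral.
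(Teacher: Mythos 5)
Your proof is correct and follows essentially the same route as the paper's for all three items: tower property for (2), and identifying $\overline X$ with a shear of $X$ (volume-preserving, sending $C$ to its Steiner symmetral) for (3), with the extra details you add (regular conditional laws of uniform vectors on convex bodies, and the ``only if'' in (3)) being exactly what the paper leaves to the reader. The one minor stylistic divergence is in (1), where you enlarge the filtration to $\mathcal{G}_k=\sigma(X_1,\dots,X_k,Y_1,\dots,Y_k)$, check the martingale property there via independence, and project back down by the tower property, whereas the paper runs a direct test-function/Fubini computation $\E\bigl[X_i f(X_1+Y_1,\dots,X_{i-1}+Y_{i-1})\bigr]=\E_X\bigl[X_i\,\E_Y[f(\dots)]\bigr]=0$; the two arguments use independence in precisely the same way, so this is a reformulation rather than a different proof. (Incidentally, in (3) you correctly take the lower boundary convex and upper boundary concave; the paper's text has these two adjectives swapped, though its subsequent formulas are consistent with your convention.)
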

\proof
(1) It is well known that $X+Y$ is still log-concave. Let us show that $\overline{X+Y}=X+Y$. Let $i \in \{2,\ldots, n\}$ and take $f:\R^{i-1}\to\R$ a bounded measurable test function, then it holds 
\[
\E\left[X_if(X_1+Y_1,\ldots,X_{i-1}+Y_{i-1})\right] = \E_{X}[X_i \E_{Y}[f(X_1+Y_1,\ldots,X_{i-1}+Y_{i-1})]] = 0.
\]
Similarly, $\E\left[Y_if(X_1+Y_1,\ldots,X_{i-1}+Y_{i-1})\right] =0$. Therefore, 
$\E\left[(X_i+Y_i)f(X_1+Y_1,\ldots,X_{i-1}+Y_{i-1})\right] =0$, and since this holds for all test function $f$, one concludes that $\E_{i-1}[(X+Y)_i]=0$ for all $i$ and so $\overline{X+Y}=X+Y$.\\
(2) The second point follows easily from the fact that for $k\leq i-1$, 
\[
\E[X_i | X_1,\ldots,X_k] = \E\left[\E[X_i | X_1,\ldots,X_{i-1}] | X_1,\ldots, X_{k}\right].
\]
(3) Observe that $C$ can be written as $C=\{ (x_1,x_2)\in \R^2 ; x_1 \in [\alpha,\beta], a(x_1)\leq x_2 \leq b(x_1) \}$, for some $\alpha<\beta$, and where $a:[\alpha,\beta] \to \R$ is a concave function and $b:[\alpha,\beta] \to \R$ is a convex function. Recall that the Steiner symmetrization of $C$ with respect to $D$ is the set $\overline{C}$ defined by 
\[
\overline{C} = \left\{(x_1,x_2)\in \R^2 ; x_1 \in [\alpha,\beta], \frac{1}{2}(a(x_1)-b(x_1))\leq x_2 \leq \frac{1}{2}(b(x_1)-a(x_1))  \right\}.
\]
Since the function $a-b$ is convex, the set $\overline{C}$ is convex. Moreover $\overline{X} = \left(X_1, X_2 - \frac{1}{2}(a(X_1)+b(X_1)) \right)$ and so for all bounded measurable test function $f:\R^2 \to \R$
\begin{align*}
\E\left[ f(\overline{X})\right] &= \frac{1}{\mathrm{Vol}(C)}\int_{\alpha}^\beta \int_{a(x_1)}^{b(x_1)}f(x_1,x_2-  \frac{1}{2}(a(x_1)+b(x_1)))\,dx_2dx_1\\
&= \frac{1}{\mathrm{Vol}(C)}\int_{\alpha}^\beta \int_{\frac{1}{2}(a(x_1)-b(x_1))}^{\frac{1}{2}(b(x_1)-a(x_1))}f(x_1, y_2)\,dy_2dx_1\\
& = \frac{1}{\mathrm{Vol}(C)} \int_{\overline{C}} f(y_1,y_2)\,dy_1dy_2.
\end{align*}
This shows that $\overline{X}$ is uniformly distributed on $\overline{C}.$
\endproof

\begin{rem}\label{remsym}
As we already mentioned, the class of log-concave random vectors such that $\overline{X}=X$ already contains unconditional log-concave random vectors and log-concave random vectors with centered independent components.  Using the properties above, it is possible to give other examples of log-concave random vectors such that $\overline{X}=X$ in arbitrary large dimension.
Namely, observe that if $X$ is a log-concave random vector taking values in $\R^k$ and such that $\overline{X}=X$, then  it is easy to check that for all $i\in \{1,\ldots,k+1\}$, the random vector $X^{i}$ defined by 
\[
X^{i} = (X_1,\ldots,X_{i-1},0,X_i,\ldots,X_k) \in \R^{k+1}
\]
is still log-concave and verifies $\overline{X^{i}}=X^{i}.$ Thanks to point (1) of Proposition \ref{propsym}, one thus sees that if $X_1,\ldots,X_{k+1}$ are independent log-concave random vectors with values in $\R^k$ and such that $\overline{X_i}=X_i$, then the random vector $Y= X_1^{1}+X_2^{2} +\cdots+X_{k+1}^{k+1}$ is still log-concave and satisfies $\overline{Y}=Y.$ Initializing this construction with $k=2$ with 
the help of point (3) of Proposition \ref{propsym} and iterating the process gives rise to a large class of non-trivial examples of log-concave random vectors such that $\overline{X}=X.$\\
\end{rem}

\section{Some background on mass transport}
\label{sect:background transport}
\label{section Knothe}

The key lemma used in \cite{CE02} is the so called \emph{above tangent lemma} recalled below. In what follows, the relative entropy (also called the Kullbak-Leibler distance) of $\nu$ with respect to $\mu$ is defined by 
\begin{equation}\label{entropy}
D(\nu \,\|\,\mu) = \int \log \frac{d\nu}{d\mu}\,d\nu,
\end{equation}
if $\nu$ is absolutely continuous with respect to $\mu$ (otherwise, we set $D(\nu\,\|\,\mu)=\infty$).
\begin{lem}[\cite{CE02}]\label{CE}
If $\mu$ is a probability measure on $\R^n$ absolutely continuous with respect to the Lebesgue measure with a density of the form $\mu(dx) = e^{-V(x)}\,dx$ where $V:\R^n\to\R$ is a function of class $\mathcal{C}^2$ such that $\mathrm{Hess}\,V\geq \rho$, $\rho \in \R$, then for all compactly supported probability measures $\nu_0,\nu_1$ absolutely continuous with respect to $\mu$, it holds 
\begin{multline}\label{eq:CE}
D(\nu_1\,\|\,\mu)
 \geq 
D(\nu_0\,\|\,\mu) + \int \left<\nabla \frac{d\nu_0}{d\mu}(x),Tx-x\right>\,\mu(dx) +
\frac{\rho}{2} \int |Tx - x|^2\,\nu_0(dx)\\ + 
\int \big({\rm Tr}\, (DT_x - {\rm I}_n) - \log |DT_x|\big)\,\nu_0(dx),
\end{multline}
where $T:\R^n\to\R^n$ pushes forward $\nu_0$ onto $\nu_1$ and defines a "suitable" change of variables.
\end{lem}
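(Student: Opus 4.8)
The plan is to run the transport argument of \cite{CE02}. I would introduce a transportation map $T$ pushing $\nu_0$ forward onto $\nu_1$ --- either the Brenier (quadratic optimal) map or, more in the spirit of the later sections, the Knothe--Rosenblatt triangular rearrangement --- record the Monge--Amp\`ere (change of variables) equation it satisfies, feed it into the relative entropy $D(\nu_1\,\|\,\mu)$, and then close the argument with the convexity lower bound coming from $\mathrm{Hess}\,V\geq\rho$ together with a single integration by parts.

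Concretely, write $d\nu_i = f_i\,d\mu$, i.e. $\nu_i(dx)=f_i(x)e^{-V(x)}\,dx$. The (a.e.) change of variables formula for $T$ reads $f_0(x)e^{-V(x)} = f_1(Tx)e^{-V(Tx)}|DT_x|$, hence, after taking logarithms, $\log f_1(Tx) = \log f_0(x) - V(x) + V(Tx) - \log|DT_x|$ for $\nu_0$-a.e.\ $x$. Since $T$ transports $\nu_0$ to $\nu_1$,
\[
D(\nu_1\,\|\,\mu)=\int\log f_1\,d\nu_1=\int\log f_1(Tx)\,d\nu_0(x) = D(\nu_0\,\|\,\mu) + \int\big(V(Tx)-V(x)\big)\,d\nu_0(x) - \int\log|DT_x|\,d\nu_0(x).
\]
Then the hypothesis $\mathrm{Hess}\,V\geq\rho$ gives the pointwise bound $V(Tx)-V(x)\geq\langle\nabla V(x),Tx-x\rangle+\tfrac{\rho}{2}|Tx-x|^2$, which upon integration against $\nu_0$ yields the term $\tfrac{\rho}{2}\int|Tx-x|^2\,d\nu_0$ and leaves $\int\langle\nabla V(x),Tx-x\rangle\,d\nu_0(x)$. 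Finally I would rewrite this last integral by integration by parts: since $\nabla(e^{-V})=-\nabla V\,e^{-V}$ and $d\nu_0=f_0e^{-V}\,dx$, one gets $\int\langle\nabla V,Tx-x\rangle f_0e^{-V}\,dx = \int e^{-V}\,\mathrm{div}\big(f_0\,(Tx-x)\big)\,dx$, and expanding the divergence together with $\mathrm{div}(Tx-x)={\rm Tr}(DT_x-{\rm I}_n)$ gives
\[
\int\langle\nabla V(x),Tx-x\rangle\,d\nu_0(x) = \int\Big\langle\nabla\tfrac{d\nu_0}{d\mu}(x),Tx-x\Big\rangle\,\mu(dx) + \int{\rm Tr}(DT_x-{\rm I}_n)\,d\nu_0(x),
\]
which is precisely the combination of terms occurring in \eqref{eq:CE}. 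Substituting this back into the chain of (in)equalities above completes the proof.

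The delicate point --- the reason the statement speaks of a ``suitable'' change of variables --- is the analytic justification rather than the algebra. One needs $T$ to be differentiable $\nu_0$-a.e.\ with a legitimate Monge--Amp\`ere equation (Alexandrov's second-differentiability theorem for the Brenier map, or the explicit lower-triangular structure with positive diagonal entries for the Knothe map), and one needs the integration by parts to carry no boundary term. The compact support of $\nu_0,\nu_1$ and the $\mathcal{C}^2$-regularity of $V$ make the boundary term vanish after a routine truncation/mollification of $f_0$; since these are exactly the technicalities handled in \cite{CE02}, I would either reproduce that reasoning or simply invoke it. I expect this measure-theoretic bookkeeping --- and not any new idea --- to be the only real obstacle.
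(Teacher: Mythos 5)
Your proof is correct and follows the same transport/change-of-variables argument that the paper uses for its special case Lemma~\ref{change of variable} (and that \cite{CE02} uses for the full statement): rewrite $D(\nu_1\,\|\,\mu)$ via the Monge--Amp\`ere equation, apply the quadratic lower bound from $\mathrm{Hess}\,V\geq\rho$, and integrate $\int\langle\nabla V,Tx-x\rangle\,d\nu_0$ by parts to produce the $\nabla\frac{d\nu_0}{d\mu}$ and $\mathrm{Tr}(DT-I_n)$ terms. Your handling of the regularity issues (Alexandrov differentiability for Brenier / triangular structure for Knothe, and the vanishing boundary term under compact support) is also the right bookkeeping, matching the paper's own disclaimer about a ``suitable'' change of variables.
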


First let us recall the classical applications of \eqref{eq:CE}. In \cite{CE02}, the inequality \eqref{eq:CE} was applied with the Brenier map $T$ (see \cite{Vil09}), that is to say the $\nu_0$ almost surely unique map $T$ achieving the infimum in the definition of the square Kantorovich distance $W_2$: 
\[
\int |Tx-x|^2\,\nu_0(dx) = \inf_{\pi \in C(\nu_0,\nu_1)} \iint |y-x|^2\,\pi(dxdy):= W_2^2(\nu_0,\nu_1),
\]
where $C(\nu_0,\nu_1)$ denotes the set of all couplings of $\nu_0,\nu_1$, (i.e probability measures $\pi$ on $\R^n\times\R^n$ having $\nu_0$ and $\nu_1$ as marginals). A fundamental property of the Brenier map $T$ is that it is the gradient of a convex function: there exists $\phi:\R^n\to\R$ convex such that $T(x)=\nabla\phi(x)$ for $\nu_0$ almost every $x \in \R^n.$ As a consequence of the inequality $\log(\lambda) \leq \lambda-1$, $\lambda>0$ and of the fact that $DT_x =\mathrm{Hess}_x\,\phi$ has a non-negative spectrum, the last term in \eqref{eq:CE} is always non-negative (assuming for simplicity that $T$ is smooth). So \eqref{eq:CE} becomes
\begin{equation}\label{eq:CEbis}
D(\nu_1\,\|\,\mu)
 \geq 
D(\nu_0\,\|\,\mu) + \int \left<\nabla \frac{d\nu_0}{d\mu}(x),Tx-x\right>\,\mu(dx) +
\frac{\rho}{2} W_2^2(\nu_0,\nu_1).
\end{equation}
Inequality \eqref{eq:CEbis}, which expresses in some sense that the graph of the map $D(\,\cdot\,\|\,\mu)$ lies above its tangent, is also related to the notion of displacement-convexity of the relative entropy along $W_2$ geodesics (see \cite{McC97,Vil09}). 
When $\rho>0$, interesting consequences can be drawn from the inequality above. For instance, choosing $\nu_0=\mu$ yields to the following transport-entropy inequality
\[
W_2^2(\nu_1,\mu) \leq \frac{2}{\rho} D(\nu_1\,\|\, \mu),\qquad \forall \nu_1.
\]
This type of inequalities goes back to the works by Marton \cite{Mar86} and Talagrand \cite{Tal96a} (see \cite{Led01,Vil09,GL10} for an introduction to the subject).
On the other hand, choosing $\nu_1=\mu$ it is not difficult to derive from \eqref{eq:CEbis} the logarithmic-Soblev inequality (see \cite{CE02,Bar10, GL10} for details)
\[
D(\nu_0\,||\,\mu) \leq \frac{2}{\rho} \int \frac{|\nabla h_0|^2}{h_0}\,d\mu,\qquad \forall \nu_0=h_0\,\mu.
\]
We refer to \cite{BK08,BGRS13} for other applications and variants of \eqref{eq:CE} and \eqref{eq:CEbis}.
\smallskip

In this paper, we will use \eqref{eq:CE} with $\rho=0$ and $\nu_0=\mu$:
\[
D(\nu_1\,\|\,\mu)
 \geq 
\int \big({\rm Tr}\, (DT_x - {\rm I}_n) - \log |DT_x|\big)\,\mu(dx).
\]
But as a main difference, we will rather use as $T$ the Knothe map \cite{Kno57} between $\mu$ and $\nu_1.$ 

Let us  recall the definition of the Knothe transport between two probability measures.
If $\mu,\nu$ are two Borel probability on $\R$ and $\mu$ has no atom, then there exists a unique non-decreasing and left continuous map $T: \R \to [-\infty,\infty]$ transporting $\mu$ on $\nu$ in the sense that $\int f(T)\,d\mu =\int f\,d\nu$ for all say bounded continuous function $f$. This map $T$ is given by 
\[
T(x) = F_\nu^{-1} \circ F_{\mu}(x),\qquad \forall x\in \R.
\]
where, for $x\in \R$ and $t\in [0,1]$,
\[
F_\mu(x)=\mu(-\infty,x]\qquad \text{and}\qquad F_\mu^{-1} (t) = \inf\{ x \in \R ; F_\mu(x)\geq t\} \in [-\infty,\infty].
\]
The map $T$ takes finite values $\mu$ almost surely. Let us mention that the map $T$ achieves the minimum value in a large class of optimal transportation problems (see for instance \cite{RR98}). This fact will not be explicitly used in the sequel.
\smallskip

The Knothe transport map is a multidimensional extension of this one dimensional transport. To define it properly, we need to introduce the following notation. 
If $\mu$ is a probability measure on $\R^n$ and $X=(X_1,\ldots,X_n)$ is a random vector of law $\mu$, we will denote by $\mu_i$ the law of $(X_1,\ldots,X_i)$.
For $i\geq 2$ and $x_1,\ldots,x_{i-1} \in \R$, we denote by $\mu_i(\,\cdot\, | x_1,\ldots,x_{i-1})$ the conditional law of $X_i$ knowing $X_1=x_1,X_2=x_2,\ldots,X_{i-1}=x_{i-1}.$ 
The conditional probability measure $\mu_i(\,\cdot\, | x_1,\ldots,x_{i-1})$ is well defined for $\mu_{i-1}$ almost all $(x_1,\ldots,x_{i-1})\in \R^{i-1}$.
When $\mu$ has a positive density $h$ with respect to the Lebesgue measure on $\R^n$, the conditional probability measures $\mu_i(\,\cdot\, | x_1,\ldots,x_{i-1})$ have an explicit density with respect to Lebesgue measure on $\R$ it holds
\[
\int f(u_i)\, \mu_i(du_i | x_1,\ldots,x_{i-1}) = \frac{\int f(u_i) h(x_1,\ldots,x_{i-1},u_i,u_{i+1},\ldots,u_n)\,du_i\cdots d_{u_n}}{\int h(x_1,\ldots,x_{i-1},u_i,u_{i+1},\ldots,u_n)\,du_i\cdots d_{u_n}},
\]
for all bounded continuous $f:\R \to \R.$
\smallskip

The Knothe map $T=(T_1,\ldots,T_n)$ transporting a probability measure $\mu$ on $\R^n$ with a positive density on another probability $\nu$, is defined recursively as follows :
\begin{itemize}
\item[-] $T_1$ is the optimal transport map sending $\mu_1$ on $\nu_1$;
\item[-] for a given $x\in \R^n$, $T_i(x_1,x_2,\ldots,x_{i-1},\,\cdot\,)$ is the one dimensional monotone map sending $\mu_i(\,\cdot\, |x_1,\ldots,x_{i-1})$ on $\nu_i(\,\cdot\,|T_1(x),\ldots,T_{i-1}(x))$.
\end{itemize}
Note that in particular, $T$ is triangular in the sense that $T_i(x)$ depends only on $x_1,\ldots, x_i.$


The following lemma is a formally contained in Lemma~\ref{CE}; for completeness, we recall its short proof below. 
\begin{lem}\label{change of variable}
Let $\mu$ be probability measure on $\R^n$ with $\mu(dx)=e^{-V(x)}\,dx$ with $V:\R^n\to\R$ a convex function of class $\mathcal{C}^1$; for all probability measure $\nu$ on $\R^n$ compactly supported with a smooth density, it holds
\begin{align*}
D(\nu\,\|\,\mu) &\geq \int \big[\mathrm{Tr}(DT(x)-I)-\log(|DT(x)|)\big]\,\mu(dx)\\
&= \int \sum_{i=1}^n \big[\partial_i T_i(x)-1 -\log\partial_iT_i(x)\big]\,\mu(dx)
\end{align*}
where $T$ is the Knothe map transporting $\mu$ on $\nu.$
\end{lem}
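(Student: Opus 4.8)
The plan is to derive Lemma~\ref{change of variable} directly from Lemma~\ref{CE} specialized to $\rho=0$ (since $V$ is convex) and $\nu_0=\mu$, and then to simplify the resulting Jacobian terms using the triangular structure of the Knothe map. First I would address the regularity mismatch: Lemma~\ref{CE} is stated for $V$ of class $\mathcal{C}^2$ and for $\nu_0,\nu_1$ compactly supported, whereas here $V$ is only $\mathcal{C}^1$ and $\nu=\nu_1$ is compactly supported with smooth density while $\mu=\nu_0$ is not. Rather than invoke Lemma~\ref{CE} as a black box, I would reprove the relevant inequality from scratch via the change-of-variables / above-tangent computation, which is why the statement says the lemma is "formally contained" in Lemma~\ref{CE}. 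Concretely, writing $\nu(dx)=e^{-W(x)}\,dx$ and $T$ for the Knothe map pushing $\mu$ forward to $\nu$, the change of variables formula gives $V(x) = W(T(x)) - \log|DT(x)|$ for $\mu$-a.e.\ $x$ (the Knothe map is differentiable a.e.\ with positive triangular Jacobian since each one-dimensional monotone rearrangement between measures with positive smooth densities is $\mathcal{C}^1$). Then
\begin{align*}
D(\nu\,\|\,\mu) &= \int \log\frac{d\nu}{d\mu}\,d\nu = \int \big(V - W\big)\,d\nu = \int \big(V(T(x)) - W(T(x))\big)\,\mu(dx)\\
&= \int \big(V(T(x)) - V(x) - \log|DT(x)|\big)\,\mu(dx),
\end{align*}
using $T_\#\mu=\nu$ in the third equality and the change-of-variables identity in the last. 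By convexity of $V$ we have $V(T(x)) - V(x) \geq \langle \nabla V(x), T(x)-x\rangle$, so
\[
D(\nu\,\|\,\mu) \geq \int \langle \nabla V(x), T(x)-x\rangle\,\mu(dx) - \int \log|DT(x)|\,\mu(dx).
\]
Next I would show the first integral on the right equals $\int \big(\mathrm{Tr}(DT(x)) - n\big)\,\mu(dx)$ by an integration by parts: since $\nabla V(x)\,e^{-V(x)} = -\nabla(e^{-V(x)})$, integrating $\langle \nabla V, T-x\rangle\,e^{-V}$ against Lebesgue measure and transferring the derivative onto $T(x)-x$ gives $\int \mathrm{div}(T(x)-x)\,\mu(dx) = \int(\mathrm{Tr}(DT(x)) - n)\,\mu(dx)$; the boundary terms vanish because $T-x$ is bounded ($\nu$ being compactly supported forces $T$ to take values in a fixed compact set) while $e^{-V}$ and its flux decay (this is where a mild justification — e.g.\ exhaustion by balls, or a standard approximation of $\mu$ by compactly-supported log-concave measures as in Lemma~\ref{CE} — is needed). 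This yields exactly the first displayed inequality of the lemma.

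For the second (equality) line, I would use that the Knothe map is triangular: $T_i(x)$ depends only on $x_1,\dots,x_i$, so $DT(x)$ is lower triangular and hence $|DT(x)| = \prod_{i=1}^n \partial_i T_i(x)$ and $\mathrm{Tr}(DT(x)) = \sum_{i=1}^n \partial_i T_i(x)$; moreover each $T_i(x_1,\dots,x_{i-1},\cdot)$ is non-decreasing, so $\partial_i T_i(x)\geq 0$ and $\log|DT(x)| = \sum_{i=1}^n \log \partial_i T_i(x)$ makes sense $\mu$-a.e. Substituting termwise gives
\[
\int \big[\mathrm{Tr}(DT(x)) - n - \log|DT(x)|\big]\,\mu(dx) = \int \sum_{i=1}^n \big[\partial_i T_i(x) - 1 - \log \partial_i T_i(x)\big]\,\mu(dx),
\]
as claimed.

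The main obstacle I anticipate is not the algebra but the analytic justification: ensuring the Knothe map is differentiable $\mu$-a.e.\ with the change-of-variables formula valid (each coordinate map is $F_{\nu_i(\cdot\mid\dots)}^{-1}\circ F_{\mu_i(\cdot\mid\dots)}$, which is $\mathcal{C}^1$ when both densities are positive and smooth, but one must check joint measurability/regularity in all variables and that the composed Jacobian behaves), and controlling the integration-by-parts boundary terms when $\mu$ has unbounded support and $V$ is merely $\mathcal{C}^1$. The clean way around this, consistent with the phrase ``formally contained in Lemma~\ref{CE}'', is to first prove everything for $\mu$ replaced by its restriction/renormalization to a large ball (which is compactly supported log-concave, so Lemma~\ref{CE} applies directly with $\rho=0$) and then pass to the limit, noting each term in the integrand is bounded below by $0$ (convexity of $t\mapsto t-1-\log t$) so monotone/Fatou-type arguments control the limit. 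I would also remark that smoothness of $T$ can be assumed for the pointwise identities since the one-dimensional building blocks are explicit, and only an a.e.\ statement is needed inside the integrals.
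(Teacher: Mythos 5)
Your proof is correct and follows essentially the same route as the paper's: the change-of-variables identity $e^{-V(x)}=h(Tx)e^{-V(Tx)}|DT(x)|$ (you write it equivalently via $W=-\log(d\nu/dx)$), the tangent inequality from convexity of $V$, integration by parts to turn $\int\nabla V\cdot(T-x)\,d\mu$ into $\int\mathrm{Tr}(DT-I)\,d\mu$, and the triangular structure of the Knothe map for the final equality. The paper likewise first argues under a $\mathcal{C}^1$ assumption on $T$ and then defers the a.e.\ regularity and change-of-variables justification to Bobkov's \cite[Lemma 3.1]{Bob07}, which is consistent with (and slightly less detailed than) your suggested approximation and Fatou argument.
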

\proof
Write $g=\frac{d\nu}{dx}$ and $h=\frac{d\nu}{d\mu}$. 
First assume that $T$ is $\mathcal{C}^1$; according to the change of variable formula, it holds
\[
e^{-V(x)}=h(Tx)e^{-V(Tx)}|DT(x)|,
\]
so taking the $\log$ and integrating with respect to $\mu$, we obtain
\[
- \int V(x)\,\mu(dx) = \int \log(h(Tx))\,\mu(dx) - \int V(Tx)\,\mu(dx) + \int \log(|DT(x)|)\,\mu(dx).
\]
So 
\[
D(\nu\,\|\,\mu) = \int V(Tx)-V(x)\,\mu(dx)-\int \log|DT(x)|\,\mu(dx).
\]
By assumption,
\[
V(y)\geq V(x)+\nabla V(x)\cdot (y-x),\qquad \forall x,y\in \R^n.
\]
So,
\[
D(\nu\,\|\,\mu) \geq \int \nabla V(x)\cdot(Tx-x)\,\mu(dx) -\int \log|DT(x)|\,\mu(dx).
\]
Note that, integrating by parts (and using that $\nu$ is compactly supported),
\begin{align*}
\int \nabla V(x)\cdot(Tx-x)\,\mu(dx)= \int \sum_{i=1}^n (\partial_iT(x)-1) e^{-V(x)}\,dx = \int \mathrm{Tr}(DT(x)-I)\,\mu(dx) 
\end{align*}
Thus,
\begin{equation}\label{above with extra term}
D(\nu\,\|\,\mu) \geq \int \big[\mathrm{Tr}(DT(x)-I)-\log|DT(x)|\big]\,\mu(dx).
\end{equation}
Actually the map $T$ is not necessarily of class $\mathcal{C}^1$ so the change of variable formula above needs to be justified. One can consult Section 3 of \cite{Bob07} and invoke for instance \cite[Lemma 3.1]{Bob07}.
\endproof

\section{A general transport inequality for log-concave probability measures}\label{section transport}

Before introducing our transport cost, we need to briefly discuss on the Cheeger constant (or equivalently, the Poincar\'e constant) of one-dimensional log-concave densities, a case where optimal bounds are known.  If $\gamma$ is a log-concave probability measure on $\R$, denote by $\lambda_\gamma$  its  Cheeger's constant, namely the largest constant for which 
\begin{equation}\label{Cheeger}
 \lambda_\gamma \int |f-m(f)|\,d\gamma \leq \int |f'|\,d\gamma
 \end{equation}
holds for all $f:\R\to \R$ locally-Lipschitz, where $m(f)$ denotes a median of $f$.
It was proven by Bobkov \cite{Bob99} that when $\gamma$ is log-concave probability measure on $\R$, one has 
\begin{equation}\label{Bobkov}
\frac{1}{3\Var(X)}\leq \lambda_\gamma^2 \leq \frac{2}{\Var(X)},
\end{equation}
with $X\sim \gamma$.
Note that if $X$ is a constant random variable, $\Var(X)=0$ and $\lambda=\infty$. 
\smallskip

In what follows, $\mu$ is a log-concave probability measure on $\R^n$ with full support and $X=(X_1, \ldots, X_n)$ a random vector distributed according to $\mu.$
\smallskip

According to Bobkov's estimate~\eqref{Bobkov}, for all $x\in \R^n$, the one dimensional (log-concave) probability $\mu_{i}(\,\cdot\, | x_1,\ldots,x_{i-1})$ verifies Cheeger's inequality~\eqref{Cheeger} with a constant (optimal up to universal factor) 
\begin{equation}\label{lambda_i}
\lambda_i^2(x)=\lambda_i^2(x_1,\ldots,x_{i-1}) := \frac{1}{3 \Var(X_i | X_1=x_1,\ldots,X_{i-1}=x_{i-1})} \in (0,+\infty]
\end{equation}
where
\begin{multline*}
\Var(X_i | X_1=x_1,\ldots,X_{i-1}=x_{i-1}) \\
= \int u^2 \mu_i(du|x_1,\ldots,x_{i-1}) - \left(\int u\,\mu_i(du|x_1,\ldots,x_{i-1})\right)^2 \in [0,+\infty).
\end{multline*}
In Theorem \ref{transport inequality} below, we prove that any log-concave probability measure on $\R^n$ verifies some transport-entropy inequality with a cost function $c_\mu$ determined by the functions $\lambda_i$ introduced above. In order to state the result, we need to introduce some additional notation.
Recall that if $Z$ is a random vector, we denote by $\overline{Z}$ the random vector defined by 
\[
\overline{Z}_i = Z_i - \E[Z_i | Z_1,\ldots,Z_{i-1}].
\]
Note in particular that $\overline{X}=R(X)$, where the recentering map $R:\R^n\to \R^n$ is defined by $R(x)=(R_1(x),\ldots,R_n(x))$, where
\begin{equation}\label{eq:R}
R_i(x) = x_i - m_i(x),\quad \text{with}\quad m_i(x) = m_i(x_1,x_2,\ldots,x_{i-1}) = \int u\,\mu_i(du | x_1,\ldots,x_{i-1})
\end{equation}
It is not difficult to check that the map $R$ is invertible. We will denote by $S=R^{-1}$ its inverse.
The cost function $c_\mu : \R^n\times \R^n \to [0,\infty]$ is defined as follows,
\[
c_{\mu} (x,y) = \frac{1}{16}\sum_{i=1}^n N\big( \lambda_i(S(x)) (x_i-y_i)\big),\qquad \forall x,y \in \R^n,
\]
where $N(t)=|t|-\log(1+|t|)$ (with the conventions $0\times \infty=0$ and $a\times \infty=\text{sign of a } \times \infty$ for $a\neq0$). The associated optimal transport cost denoted by $\mathcal{T}_\mu$ is defined by
\[
\mathcal{T}_\mu(\nu_1,\nu_2) = \inf_{\pi \in C(\nu_1,\nu_2)} \iint c_\mu(x_1,x_2)\,\pi(dx_1dx_2),
\]
where $C(\nu_1,\nu_2)$ is the set of all probability measures $\pi$ on $\R^n\times \R^n$ such that 
\[
\pi(dx_1\times \R^n)=\nu_1(dx_1)\quad \text{and}\quad \pi(\R^n\times dx_2)=\nu_2(dx_2)
\]

Let us mention that the transport inequality below also holds with the cost  function $\tilde c_\mu : \R^n\times \R^n \to [0,\infty]$ defined as follows
\begin{equation}\label{tildemu}
\tilde c_{\mu} (x,y) = \frac{1}{16}N\left(\sqrt{\sum_{i=1}^n  \lambda_i(S(x))^2(x_i-y_i)^2 }  \,  \right),\qquad \forall x,y \in \R^n.
\end{equation}
Indeed  the function $x\mapsto N(\sqrt{x})$ is subadditive, since it is concave on $\R^+$ and vanishes at $0$, so we have for all $x,y\in \R^n$, 
$c_{\mu} (x,y) \ge \tilde c_{\mu} (x,y) $.

\begin{thm}\label{transport inequality}
Let $X$ be an $n$-dimensional log-concave random vector and let $\mu$ be its law; for all probability measure $\nu$ on $\R^n$ with finite first moment, it holds
\begin{equation}\label{ineq T_mu}
\mathcal{T}_{\mu}(\bar{\mu},\bar{\nu}) \leq  D(\nu\,\|\,\mu),
\end{equation}
where $\bar{\mu}$ is the law of $\overline{X}$ and $\bar{\nu}$ is the law of $\overline{Y}$ with $Y$ distributed according to $\nu.$
\end{thm}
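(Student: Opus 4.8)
The plan is to combine the above-tangent lemma (Lemma~\ref{change of variable}) with the Knothe transport map between $\mu$ and $\nu$, and then use the one-dimensional Cheeger estimate of Bobkov to convert the resulting $\mathrm{Tr} - \log$ expression into the cost $c_\mu$ evaluated at the recentered pair. First, I would reduce to the case where $\nu$ has a smooth, compactly supported density: a standard approximation argument (truncate and convolve, using lower semicontinuity of relative entropy and of the transport cost, and the fact that $N$ is continuous with at most linear growth, so that the finite first moment of $\nu$ controls the cost) will let me pass to the general case at the end. So assume $\nu = g\,dx$ nice, and let $T = (T_1,\dots,T_n)$ be the Knothe map pushing $\mu$ forward to $\nu$, which is triangular with $\partial_i T_i > 0$.

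Next, by Lemma~\ref{change of variable},
\[
D(\nu\,\|\,\mu) \;\geq\; \int \sum_{i=1}^n \bigl[\partial_i T_i(x) - 1 - \log \partial_i T_i(x)\bigr]\,\mu(dx).
\]
The core of the argument is a pointwise-in-$(x_1,\dots,x_{i-1})$ one-dimensional estimate: fix the first $i-1$ coordinates, and consider the monotone map $t \mapsto T_i(x_1,\dots,x_{i-1},t)$ sending the log-concave measure $\mu_i(\cdot\,|\,x_1,\dots,x_{i-1})$ to $\nu_i(\cdot\,|\,T_1(x),\dots,T_{i-1}(x))$. I would show that for any non-decreasing map $\tau$ between two probability measures on $\mathbb{R}$, with the source measure $\gamma$ satisfying Cheeger's inequality with constant $\lambda$,
\[
\int \bigl[\tau' - 1 - \log \tau'\bigr]\,d\gamma \;\geq\; \tfrac{1}{16}\int N\bigl(\lambda\,(\tau(t) - t)\bigr)\,\gamma(dt).
\]
This is the key inequality; I expect it is where the Cheeger constant enters, via the identity $\int (\tau - \mathrm{id})'\,d\gamma = 0$-type bookkeeping combined with $\int |(\tau-\mathrm{id})|\,d\gamma$ being controlled by $\int |(\tau - \mathrm{id})'|\,d\gamma / \lambda$ through Cheeger applied to $f = \tau - \mathrm{id}$ (after centering at its median). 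Granting this, apply it with $\gamma = \mu_i(\cdot\,|\,x_1,\dots,x_{i-1})$, $\lambda = \lambda_i(x)$, $\tau = T_i(x_1,\dots,x_{i-1},\cdot)$, integrate over $(x_1,\dots,x_{i-1}) \sim \mu_{i-1}$, and sum over $i$ to get
\[
D(\nu\,\|\,\mu) \;\geq\; \frac{1}{16}\sum_{i=1}^n \int N\bigl(\lambda_i(x)\,(T_i(x) - x_i)\bigr)\,\mu(dx).
\]

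Finally I would recognize the right-hand side as a transport cost for the pair $(\bar\mu,\bar\nu)$. The point is that $R$ pushes $\mu$ to $\bar\mu$ and, because $T$ is triangular and $R$ acts triangularly, $R \circ T$ followed by $R^{-1}$ composed appropriately realizes a coupling: more precisely, the map $x \mapsto (R(x), R(Tx))$ under $\mu$ is a coupling of $\bar\mu$ and $\bar\nu$ (the second marginal is the law of $R(TX) = \overline{Y}$ since $T$ is the Knothe map and $R$ recenters using exactly the conditional means of $\nu$ along the transported coordinates — this needs the compatibility of the recentering with the triangular structure, which I would verify coordinate by coordinate). Writing $x = S(\tilde x)$ with $\tilde x = R(x) \sim \bar\mu$, and noting $\lambda_i(x) = \lambda_i(S(\tilde x))$ and $T_i(x) - x_i = (R(Tx))_i - (R(x))_i + (\text{terms that telescope via } m_i)$, one matches the integrand to $16\, c_\mu(\tilde x, \tilde y)$ with $\tilde y = R(Tx)$, giving $D(\nu\,\|\,\mu) \geq \mathcal{T}_\mu(\bar\mu, \bar\nu)$. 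The main obstacle is the bookkeeping in this last identification — checking that the recentering of the Knothe-transported measure produces precisely the pair $(\overline X, \overline Y)$ and that the cost variable $S(x)$ in $c_\mu$ is the correct one — together with establishing the sharp constant $\tfrac{1}{16}$ in the one-dimensional $N$-inequality above.
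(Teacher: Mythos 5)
Your overall architecture is the right one and is in fact the paper's: pass to nice $\nu$, apply Lemma~\ref{change of variable} with the Knothe map to reduce to $\sum_i \int (\partial_i T_i -1 -\log\partial_i T_i)\,d\mu$, then use Bobkov's one-dimensional Cheeger bound on each conditional to convert $N(\partial_i T_i -1)$ into the cost. However, the key one-dimensional inequality you state is wrong, and the error is not cosmetic: you claim
\[
\int \bigl[\tau' - 1 - \log\tau'\bigr]\,d\gamma \;\geq\; \tfrac{1}{16}\int N\bigl(\lambda(\tau(t)-t)\bigr)\,\gamma(dt)
\]
without any centering of the displacement $\tau-\mathrm{id}$. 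This is false: take $\tau(t)=t+c$ with $c\neq0$, so the left side is $0$ while the right side is $\tfrac{1}{16}N(\lambda c)>0$. The correct statement, which is what the paper proves from the Bobkov--Houdr\'e form of Cheeger's inequality (after passing from the median to the mean at the price of the factor $16$), is
\[
\int \bigl[\tau' - 1 - \log\tau'\bigr]\,d\gamma \;\geq\; \tfrac{1}{16}\int N\Bigl(\lambda\bigl(\tau(t)-t-\textstyle\int(\tau-\mathrm{id})\,d\gamma\bigr)\Bigr)\,\gamma(dt).
\]
The centering term $\int(\tau-\mathrm{id})\,d\gamma$ is not something that ``telescopes away''; it is precisely the mechanism that makes the theorem hold for $(\bar\mu,\bar\nu)$ rather than $(\mu,\nu)$ (for which the inequality is false, by considering translations of $\mu$).

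This propagates to your final identification, which is also incorrect as stated. You claim the map $x\mapsto (R(x),R(Tx))$ under $\mu$ is a coupling of $\bar\mu$ and $\bar\nu$. It is not: $R$ is the recentering map associated to $\mu$, so $R(Tx)$ under $\mu$ has the law of $R(Y)$ with $Y\sim\nu$, which is in general different from the law of $\overline Y$ (which requires subtracting $\nu$'s conditional means, not $\mu$'s). When one applies the correctly centered Cheeger inequality to $f=\tau-\mathrm{id}$ with $\gamma=\mu_i(\cdot\,|\,x_1,\dots,x_{i-1})$, the centering term produced is $\E[T_i(X)-X_i\,|\,X_1,\dots,X_{i-1}]$, and since the Knothe map is triangular and strictly increasing in its last argument one has $\E[T_i(X)\,|\,X_1,\dots,X_{i-1}]=\E[T_i(X)\,|\,T_1(X),\dots,T_{i-1}(X)]$; this is exactly the $\nu$-conditional mean of $Y_i$ given $Y_1,\dots,Y_{i-1}$, evaluated along the transport. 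Hence the quantity appearing inside $N(\cdot)$ is $\overline Y_i-\overline X_i$ with $\overline Y_i=T_i(X)-\E[T_i(X)\,|\,X_{1},\dots,X_{i-1}]$, and $(\overline X,\overline Y)$ so defined is a genuine coupling of $(\bar\mu,\bar\nu)$. Once you repair the centering in the 1D step, the coupling falls out automatically and there is no separate ``bookkeeping'' to do.
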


Note that the transport cost depends on $\mu$, and not $\overline \mu$. Indeed, it is given by the  values of $\lambda_i$, which depend on $X\sim \mu$ through formula~\eqref{lambda_i}.

\proof
According to a result by Bobkov and Houdr\'e \cite{BH97}, if $\gamma$ is probability measure on $\R$ verifying Cheeger's inequality \eqref{Cheeger} with constant $\lambda$, then for all convex even function $L:\R\to\R^+$ such that $L(0)=0$, $L(x)>0$ for all $x>0$ and $p_L:=\sup \frac{xL'(x)}{L(x)}<+\infty$, it holds
\[
\int L(f-m(f))\,d\gamma \leq \int L(p_L f'/\lambda)\,d\gamma,
\]
where $m(f)$ denotes the median of $f.$
It will be convenient to replace the median of $f$ by its mean denoted by $\gamma(f)$. First observe that Jensen inequality yields $L(\gamma(f)-m(f))\leq \int L(p_L f'/\lambda)\,d\gamma.$ On the other hand, the convexity of $L$ implies that $$\int L(f-\gamma(f))\,d\gamma \leq \frac{1}{2} \int L(2(f-m(f)))\,d\gamma + \frac{1}{2}L(2(m(f)-\gamma(f))).$$ Finally, it is not difficult to check that the function $L^{1/p_L}$ is subbadditive (see for instance \cite[Lemma 4.7]{GRS13}). It follows that $L(2a)\leq 2^{p_L}L(a)$, $a\geq0.$ Therefore,
\[
\int L(f-\gamma(f))\,d\gamma \leq 2^{p_L} \int L(p_L f'/\lambda)\,d\gamma.
\]
As it is easy to see, for the function $N$, it holds $p_N\leq 2$.  So we have the inequality
\begin{equation}\label{Cheeger gamma}
\frac{1}{16}\int N\left(\lambda(f-\gamma(f))\right) \,d\gamma \leq  \int L(f')\,d\gamma.
\end{equation}
First let us assume that $\mu(dx)=e^{-V(x)}\,dx$ where $V:\R^n\to \R$ is a convex function of class $\mathcal{C}^1$ and $\nu$ is compactly supported with a smooth density.
If $X$ is a random vector of law $\mu$, then using Lemma \ref{change of variable}, the inequality $t-\log(1+t) \geq N(t),$ $t>-1$ and the inequality \eqref{Cheeger gamma}, it holds
\begin{eqnarray*}
D(\nu\,\|\,\mu)&\geq&  \sum_{i=1}^n \E\left[N(\partial_iT_i(X)-1)\right]\\
& =& \sum_{i=1}^n \E\left[\   \E\left[N(\partial_iT_i(X)-1) \,|\, X_1,\ldots,X_{i-1} \right]\   \right]\\
& =& \sum_{i=1}^n \E\left[\   \E\left[N(\partial_i(T_i-x_i)(X)) \,|\, X_1,\ldots,X_{i-1} \right]\   \right]\\
&\geq&  \frac{1}{16} \sum_{i=1}^n \E\Big\{\  \E\big[\ N\left( \lambda_i(X)\left(T_i(X)-\E[T_i(X)|X_1,\ldots,X_{i-1}]- X_i + \E[X_i|X_1,\ldots,X_{i-1}]\right)\right) \\
& &\hskip3cm\big| X_1,\ldots,X_{i-1}\big]    \Big\}\\
&= & \frac{1}{16} \sum_{i=1}^n \E\big[\  N\left( \lambda_i(X)\left(T_i(X)-\E[T_i(X)|X_1,\ldots,X_{i-1}]- X_i + \E[X_i|X_1,\ldots,X_{i-1}]\right)\right)    \big].
\end{eqnarray*}
Note that, since $T_1(X),\ldots,T_{i-1}(X)$ are functions of $X_1,\ldots,X_{i-1}$, it holds
\[
\E[T_i(X) | X_1,\ldots,X_{i-1}] = \E[T_i(X) | T_1(X),\ldots,T_{i-1}(X)]
\]
almost surely.
It follows, that the vector $\overline{Y}$ defined by $\overline{Y}_i = T_i(X)-\E[T_i(X)|X_1,\ldots,X_{i-1}]$ has law $\bar{\nu}.$ 
Using the definition of our cost, we see that
\begin{equation*}\label{eq:a modifier}
D(\nu\,\|\,\mu)\geq  \frac{1}{16}  \E\left[\sum_{i=1}^n  N\left( \lambda_i(S(\overline{X}))\left(\overline{Y}_i- \overline{X}_i\right)\right)    \right]= \E\left[c_\mu(\overline{X},\overline{Y})\right].
\end{equation*}
Therefore, by definition of $\mathcal{T}_\mu$, we have
\[
D(\nu\,\|\,\mu)\geq \mathcal{T}_\mu(\bar{\mu},\bar{\nu}).
\]
Using classical approximation arguments, one extends the inequality above to all probability measures $\nu$ with finite finite first moment.
\smallskip

This completes the proof of Theorem \ref{transport inequality} in the case where $\mu(dx)=e^{-V(x)}\,dx$ with a convex function $V$ of class $\mathcal{C}^1$ on $\R^n.$
The conclusion is then extended, using classical approximation arguments, to any $\mu(dx)=e^{-V(x)}\,dx$ where $V:\R^n \to \R\cup \{+\infty\}$ is a lower semi-continuous convex function whose domain has a non empty interior.
A way to do it is to consider the family of convex functions $V_s$, $s>0$ defined by
\[
V_s(x) = \inf_{y\in \R^n}\left\{V(y) + \frac{1}{s} |x-y|^2\right\},\qquad x\in \R^n,\ s>0.
\]
It is well known that for all $s>0$, $V_s:\R^n \to \R$ is a $\mathcal{C}^1$ smooth convex function on $\R^n$ converging monotonically to $V$ as $s\to 0$ (see for instance \cite[Theorem 4.1.4]{HUL93}). Details are left to the reader.
\endproof

\proof[Proof of Theorem \ref{thm:transport T2}] 
Assume that $\mu$ is the law of an $n$-dimensional log-concave random vector $X$ taking values in the hypercube $Q=[-R,R]^n$. This assumption on the support of $\mu$ has for consequence that for all $x\in Q$,
\[
\Var(X_i | X_1=x_1,\ldots,X_{i-1}=x_i) \leq 2R^2.
\]
Therefore, $\lambda_i(x) \geq 1/(\sqrt{6}R)$ for all $i\in \{1,\ldots,n\}$ and $x\in Q$. It is not difficult to check that there is an absolute constant $c>0$ such that $N(u)\geq cu^2$ for all $|u| \leq 2/\sqrt{6}$. So, if $\nu$ is a given probability measure on $Q$, then by Theorem~\ref{transport inequality} it holds
\[
D(\nu\,\|\,\mu)\geq \frac{c}{R^2} \E\left[|\overline{X}-\overline{Y}|^2\right],
\]
which completes the proof.
\endproof

\section{Weighted Poincar\'e inequalities for log-concave probability measures}
In this last section, we use a classical linearization technique  to prove that the transport cost inequality obtained in Theorem \ref{transport inequality} implies the weighted Poincar\'e inequality of Theorem \ref{main result}. Such linearization depends only on the behavior of the cost for small distances. It will be more convenient, notationnaly speaking, but equivalent, to use the cost $\tilde c_\mu$ defined by~\eqref{tildemu}  in the definition of $\mathcal T_\mu$ and in Theorem~\ref{transport inequality}, rather than $c_\mu$.
\smallskip

Let us introduce the following supremum convolution operator 
\[
P_tf(x) = \sup_{y\in \R^n}\left\{f(y) - \frac{1}{t} \tilde c_\mu(x,y)\right\},\qquad \forall x\in \R^n,\qquad \forall t>0,
\]
which is well defined for instance for bounded continuous function $f$ on $\R^n$. It can be shown that he function $u(t,x) = P_tf(x)$ satisfies in some weak sense the following Hamilton-Jacobi equation
\[
\partial_t u(t,x) =  8 \sum_{i=1}^n \frac{1}{\lambda_i^2(S(x))} \left(\partial_{x_i} u\right)^2(t,x).
\]
Actually, in what follows, we will only need the following elementary inequality:
\begin{lem}\label{HJ}
For all differentiable bounded Lipschitz function $f:\R^n \to \R$, 
\[
\limsup_{t \to 0} \frac{1}{t} \int P_tf-  f \,d\nu \leq 8  \int \sum_{i=1}^n \frac{1}{\lambda_i^2(S(x))} \left(\partial_{x_i} f\right)^2(x)\,\nu(dx),
\]
for all probability measure $\nu$ on $\R^n$ such that $\int \lambda_i^{-2}(S)\,d\nu$ is finite for every $i\in \{1,\ldots,n\}$.
\end{lem}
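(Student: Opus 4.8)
The plan is to estimate $P_tf(x)-f(x)$ pointwise for small $t$ and then integrate. Fix $x$ and write $f(y)-f(x) = \nabla f(x)\cdot(y-x) + o(|y-x|)$ using differentiability. Since $f$ is bounded and Lipschitz, the supremum defining $P_tf(x)$ is attained (or approximately attained) at points $y$ with $\tilde c_\mu(x,y) = O(t)$; because $\tilde c_\mu(x,y) = \tfrac1{16}N\big(\sqrt{\sum_i \lambda_i(S(x))^2(x_i-y_i)^2}\big)$ and $N(r)\sim r^2/2$ near $0$, this forces the relevant $y$ to satisfy $\sum_i \lambda_i(S(x))^2(x_i-y_i)^2 = O(t)$, hence $|y-x|\to 0$ as $t\to 0$ (on the set where the $\lambda_i(S(x))$ are positive and finite; the degenerate coordinates where $\lambda_i=\infty$ force $y_i=x_i$, and the corresponding term $\lambda_i^{-2}(S(x))=0$ drops out of the right-hand side, consistently). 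So the maximization is effectively local.

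Next I would carry out the local computation. Setting $v = y-x$, for small $t$ we need to maximize $\nabla f(x)\cdot v - \tfrac1t\tilde c_\mu(x, x+v) + o(|v|)$. Using $N(r)\le r^2/2$ we have $\tilde c_\mu(x,x+v)\le \tfrac1{32}\sum_i \lambda_i(S(x))^2 v_i^2$, so
\[
P_tf(x)-f(x) \le \sup_{v}\Big\{ \nabla f(x)\cdot v - \frac{1}{32 t}\sum_{i=1}^n \lambda_i(S(x))^2 v_i^2 \Big\} + o(t),
\]
and the quadratic maximization gives $\sup_v\{\,\cdot\,\} = 8t\sum_i \lambda_i^{-2}(S(x))(\partial_i f(x))^2$. (The $o(|v|)$ error term, with $|v| = O(\sqrt t)$ on the effective domain, contributes $o(t)$ after one checks it is controlled uniformly using the Lipschitz bound; one only needs a one-sided inequality, so a crude bound suffices.) This yields the pointwise estimate
\[
\frac{P_tf(x)-f(x)}{t} \le 8\sum_{i=1}^n \frac{1}{\lambda_i^2(S(x))}(\partial_i f(x))^2 + \frac{o(t)}{t}.
\]

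Finally I would integrate against $\nu$ and pass to the limit. Since $f$ is bounded, $|P_tf(x)-f(x)|/t$ has an integrable envelope near $t=0$ — indeed $P_tf \ge f$ (take $y=x$) and $P_tf \le \sup f$, but to apply Fatou/dominated convergence cleanly one wants a uniform-in-$t$ bound; the cleanest route is to use the pointwise inequality above directly together with the hypothesis $\int\lambda_i^{-2}(S)\,d\nu<\infty$, plus a crude uniform bound $\tfrac1t(P_tf-f)\le 2\|f\|_\infty/t$ is not uniform, so instead one shows $\tfrac1t(P_tf(x)-f(x)) \le C(\mathrm{Lip} f)^2 \sum_i \lambda_i^{-2}(S(x)) + o(1)$ uniformly and invokes dominated convergence. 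Then
\[
\limsup_{t\to 0}\frac1t\int (P_tf - f)\,d\nu \le \int \limsup_{t\to 0}\frac{P_tf(x)-f(x)}{t}\,\nu(dx) \le 8\int\sum_{i=1}^n \frac{1}{\lambda_i^2(S(x))}(\partial_i f(x))^2\,\nu(dx).
\]
The main obstacle is the bookkeeping in the degenerate coordinates (where $\lambda_i(S(x))\in\{0,\infty\}$ via the conventions on $N$) and making the localization/error control uniform enough in $x$ to justify passing the $\limsup$ inside the integral; handling the $o(|v|)$ term rigorously with only the Lipschitz and differentiability hypotheses, rather than $C^2$, requires a little care but is routine.
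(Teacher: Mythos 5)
Your overall strategy --- localize the supremum to a ball of radius $O(\sqrt t)$ around $x$, Taylor-expand $f$ and $N$ there, and pass the $\limsup$ inside the integral via a dominating function --- coincides with the paper's. However, there is a direction error at the key step. You invoke $N(r)\le r^2/2$ to get $\tilde c_\mu(x,x+v)\le\frac{1}{32}\sum_i\lambda_i^2(S(x))v_i^2$ and then conclude $P_tf(x)-f(x)\le\sup_v\{\nabla f(x)\cdot v-\frac{1}{32t}\sum_i\lambda_i^2(S(x))v_i^2\}+o(t)$. This does not follow: an upper bound on $\tilde c_\mu$ makes the term $-\frac1t\tilde c_\mu$ inside the supremum \emph{larger}, hence bounds $P_tf(x)$ from \emph{below}, not above. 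To bound $P_tf(x)$ from above you need a \emph{lower} bound on $\tilde c_\mu$ on the localized ball. Since $\|v\|_x=O(\sqrt t)\to 0$ there, the Taylor asymptotic $N(r)=\frac{r^2}{2}+o(r^2)$ gives $\tilde c_\mu(x,x+v)\ge\frac{1}{32}(1+o(1))\sum_i\lambda_i^2(S(x))v_i^2$ for small $t$, which then yields the correct pointwise bound $\frac1t(P_tf(x)-f(x))\le(8+o(1))\sum_i\lambda_i^{-2}(S(x))(\partial_if(x))^2+o(1)$. This is precisely what the paper accomplishes by expanding $N(\|u\|_x)=\frac12\|u\|_x^2+o(\|u\|_x^2)$.

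Second, the dominating function that justifies passing $\limsup$ under the integral (reverse Fatou) is asserted but not produced: you write that one shows $\frac1t(P_tf(x)-f(x))\le C(\mathrm{Lip}\,f)^2\sum_i\lambda_i^{-2}(S(x))+o(1)$ ``uniformly,'' but this is exactly the point requiring proof, and the meaning of your $o(1)$ (uniform in $x$?) is left unclear. The paper does this concretely: after localizing to $\|y-x\|_x\le N^{-1}(48Mt)$ and using $uv\le u^2/2+v^2/2$, it obtains $\frac1t(P_tf(x)-f(x))\le 4\sup_{\|y-x\|_x\le N^{-1}(48Mt)}\frac{(f(y)-f(x))^2}{N(\|y-x\|_x)}$, and then, for all $t\le 1/(48M)$, derives the $t$-uniform dominating bound $\frac1t(P_tf(x)-f(x))\le aL^2/\lambda^*(x)^2$ with $\lambda^*(x)=\min_i\lambda_i(S(x))$, $L=\mathrm{Lip}\,f$, and $a=4\sup_{0<v\le N^{-1}(1)}v^2/N(v)$; this is $\nu$-integrable because $\lambda^*(x)^{-2}\le\sum_i\lambda_i^{-2}(S(x))$, which is the only place the integrability hypothesis on $\nu$ enters. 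You should supply this bound explicitly rather than gesture at it.
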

Let us admit the lemma for a moment and prove Theorem \ref{main result}.
\proof[Proof of Theorem \ref{main result}]
Let $g:\R^n \to \R$ be a bounded function such that $\int g\,d\mu = 0$ and define for all $t\geq 0$ the measure $\nu^{\, t}(dx)= (1+tg)\,\mu(dx)$. If $t$ is small enough, then $\nu^{\, t}$ is a probability measure.
Let $\pi$ be a coupling of $\bar{\mu}$ and $\overline{\nu^{\, t}}$, and $a>0$ be a parameter whose value will be fixed later on ; for all bounded differentiable Lipschitz function $f:\R^n \to \R$, it holds
\begin{align*}
\frac{1}{t}\left(\int f d\overline{\nu^{\, t}} - \int f\,d\overline{\mu} \right)&= \frac{1}{t} \int f(y)-f(x)\,\pi(dxdy)\\
& =  \frac{1}{t} \int f(y)-P_{at}f(x)\,\pi(dxdy) + \frac{1}{t} \int P_{at}f(x) -f(x)\,\overline{\mu}(dx)\\
& \leq  \frac{1}{at^2} \int \tilde c_\mu(x,y)\,\pi(dxdy) + \frac{1}{t} \int P_{at}f(x) -f(x)\,\overline{\mu}(dx),
\end{align*}
where the last line comes from the inequality $f(y)-P_sf(x)  \leq \frac{1}{s}\tilde c_{\mu}(x,y)$ for all $s>0.$
So optimizing over $\pi \in C(\bar{\mu},\overline{\nu^{\, t}})$, we get, for all $t$ small enough,
\begin{align*}
\frac{1}{t}\left(\int f d\overline{\nu^{\, t}} - \int f\,d\overline{\mu} \right)& \leq \frac{1}{at^2} \mathcal{T}_{\mu}(\bar{\mu},\overline{\nu^{\, t}}) +  \frac{1}{t} \int P_{at}f(x) -f(x)\,\overline{\mu}(dx)\\
& \leq \frac{1}{at^2} D(\nu^{\, t}\, \|\, \mu) +  \frac{1}{t} \int P_{at}f(x) -f(x)\,\overline{\mu}(dx),
\end{align*}
where the last inequality comes from Theorem \ref{transport inequality}. A straightforward calculation shows that $t^{-2}D(\nu^{\, t} \,\|\,\mu) \to \frac{1}{2} \int g^2\,d\mu$ when $t$ goes to $0.$ Therefore, using Lemma \ref{HJ}, we get
\[
\limsup_{t\to 0} \frac{1}{t}\left(\int f d\overline{\nu^{\, t}} - \int f\,d\overline{\mu} \right) \leq \frac{1}{2a} \int g^2\,d\mu + 8a  \int \sum_{i=1}^n \frac{1}{\lambda_i^2(S(x))} \left(\partial_{x_i} f\right)^2(x)\,\bar{\mu}(dx)
\]
and optimizing over $a>0$
\[
\limsup_{t\to 0} \frac{1}{t}\left(\int f d\overline{\nu^{\, t}} - \int f\,d\overline{\mu} \right) \leq 4  \left(\int g^2\,d\mu\right)^{1/2} \left(\int \sum_{i=1}^n \frac{1}{\lambda_i^2(S(x))} \left(\partial_{x_i} f\right)^2(x)\,\bar{\mu}(dx)\right)^{1/2}
\]
Now let us evaluate the left hand side. 
Consider the map $R^t$ defined by 
\[
R^t(x) =\left[x_1 - \int u_1\,\nu^{\, t}_{1}(du), x_2 - \int u_2 \,\nu^{\, t}_{2}(du_2 | x_1),\ldots,x_n - \int u_n\,\nu^{\, t}_{n}(du_n|x_1,\ldots,x_{n-1}) \right].
\]
For $t=0$, $R^0=R$ is the map introduced in \eqref{eq:R}.
Then $\overline{\nu^{\, t}}$ is the image of $\nu^{\, t}$ by the map $R^t$ and $\bar{\mu}$ the image of $\mu$ by the map $R.$ Therefore,
\begin{align}
\frac{1}{t}\left(\int f d\overline{\nu^{\, t}} - \int f\,d\overline{\mu} \right) &= \frac{1}{t}\left(\int f(R^t) (1+tg)\,d\mu  - \int f(R)\,d\mu \right)\notag\\
& \to -\int \nabla f(R) \cdot U\,d\mu + \int f(R)g\,d\mu,\label{limite a justifier}
\end{align}
when $t$ goes to $0$, where $U:= \lim_{t \to 0} \frac{1}{t}(R^t-R).$ Let us calculate $U$.
Writing the definition, it is not difficult to see that,
\[
\int u_i\,\nu_i^{\,t}(du_i | x_1,\ldots, x_{i-1}) = \frac{a_i+t b_i}{c_i+ td_i},
\]
with 
\begin{align*}
a_i&= \int u_i e^{-V(x_1,\ldots,x_{i-1}, u_i,\ldots,u_n) }\,du_i\cdots du_n,\\
b_i& = \int u_i g(x_1,\ldots,x_{i-1},u_i,\ldots,u_n)e^{-V(x_1,\ldots,x_{i-1}, u_i,\ldots,u_n) }\,du_i\cdots du_n,\\
c_i &=\int  e^{-V(x_1,\ldots,x_{i-1}, u_i,\ldots,u_n) }\,du_i\cdots du_n,\\
d_i&=\int g(x_1,\ldots,x_{i-1},u_i,\ldots,u_n)e^{-V(x_1,\ldots,x_{i-1}, u_i,\ldots,u_n) }\,du_i\cdots du_n.
\end{align*}
Therefore,
\begin{align*}
U_i(x) &= \lim_{t\to 0}\frac{1}{t}\left( \int u_i\,d\nu^{\,t}_i(du_i | x_1,\ldots,x_{i-1}) -  \int u_i\,d\mu_i(du_i| x_1,\ldots,x_{i-1}) \right)\\
& = \lim_{t \to 0} \frac{1}{t} \left(  \frac{a_i+t b_i}{c_i+ td_i}-\frac{a_i}{c_i} \right) = \frac{b_i}{c_i}-\frac{a_i}{c_i} \frac{d_i}{c_i} \\
&= \E[X_ig(X)| X_1=x_1,\ldots,X_{i-1}=x_{i-1}]\\
&\quad- \E[X_i| X_1=x_1,\ldots,X_{i-1}=x_{i-1}]\cdot \E[g(X)|X_1=x_1,\ldots,X_{i-1}=x_{i-1}].
\end{align*}
It is easy to check that $\frac{1}{t}\left| \frac{a_i+tb_i}{c_i+td_i} - \frac{a_i}{c_i} \right| \leq \frac{2M}{1-tM}\frac{|a_i|}{c_i}$ for $t$ sufficiently small, where $M = \sup|g|$. This can be used to justify the limit in \eqref{limite a justifier}. Details are left to the reader.

According to what precedes, 
\[
U(X)=\E_{i-1}[X_ig(X)] -\E_{i-1}[X_i]\E_{i-1}[g(X)]  = \E_{i-1}[\overline{X}_ig(X)].
\]
So putting everything together, we get 
\begin{align*}
\E[f(\overline{X})g(X)] &\leq 4\E[g^2(X)]^{1/2}\E\left[\sum_{i=1}^n \frac{1}{\lambda_i^2(S(\overline{X}))} \left(\partial_i f(\overline{X})\right)^2\right]^{1/2}
+ \sum_{i=1}^n \E\left[ \E_{i-1}[\overline{X}_ig(X)] \partial_i f(\overline{X}) \right] \\
& = 4\sqrt{3}\E[g^2(X)]^{1/2}\E\left[\sum_{i=1}^n \E\left[\overline{X}_i^2|\overline{X}_1,\ldots,\overline{X}_{i-1}\right] \left(\partial_i f(\overline{X})\right)^2\right]^{1/2}
+ \sum_{i=1}^n \E\left[ \E_{i-1}\left[\overline{X}_ig(X)\right] \partial_i f(\overline{X}) \right],
\end{align*}
where the second line comes from the definition of the $\lambda_i$'s and the identity
\begin{align*}
\Var(X_i | X_1=S_1(\bar{x}),\ldots,X_{i-1}=S_{i-1}(\bar{x}))  &= \E[\overline{X}_i^2 | X_1=S_1(\bar{x}),\ldots,X_{i-1}=S_{i-1}(\bar{x})]\\
&=  \E\left[\overline{X}_i^2 | \overline{X}_1=\bar{x}_1,\ldots,\overline{X}_{i-1}=\overline{x}_{i-1}\right],
\end{align*}
for all $\bar{x}=(\bar{x}_1,\ldots,\overline{x}_{n}) \in \R^n$.
Finally let us bound the last term. Using Cauchy-Schwarz, it holds
\begin{align*}
\sum_{i=1}^n \E\left[ \E_{i-1}[\overline{X}_ig(X)] \partial_i f(\overline{X}) \right]&= \sum_{i=1}^n \E\left[ \overline{X}_ig(X) \E_{i-1}[\partial_i f(\overline{X})] \right]\\
& \leq \E[g^2(X)]^{1/2}\E\left[ \left(\sum_{i=1}^n \overline{X}_i\E_{i-1}[\partial_if (\overline{X})]    \right)^2\right]^{1/2}.
\end{align*}
Now observe that if $i\leq j-1$, then, since $\overline{X}_i\E_{i-1}[\partial_if (\overline{X})]\E_{j-1}[\partial_if (\overline{X})]$ is a function of $X_1,\ldots,X_{j-1}$, it holds
\[
\E\left[  \overline{X}_i\E_{i-1}[\partial_if (\overline{X})] \cdot \overline{X}_j\E_{j-1}[\partial_if (\overline{X})\right] = \E\left[  \overline{X}_i\E_{i-1}[\partial_if (\overline{X})] \E_{j-1}[\partial_if (\overline{X})] \cdot \E_{j-1}[\overline{X}_j]\right]=0,
\]
since $\E_{j-1}[\overline{X}_j]=0.$ Therefore, 
\begin{align*}
\sum_{i=1}^n \E\left[ \E_{i-1}[\overline{X}_ig(X)] \partial_i f(\overline{X}) \right]& \leq \E[g^2(X)]^{1/2}\E\left[ \sum_{i=1}^n \overline{X}_i^2\E_{i-1}[\partial_if (\overline{X})]^2 \right]^{1/2}\\
& \leq  \E[g^2(X)]^{1/2}\E\left[ \sum_{i=1}^n \overline{X}_i^2\E_{i-1}[\partial_if (\overline{X})^2] \right]^{1/2}\\
& = \E[g^2(X)]^{1/2}\E\left[ \sum_{i=1}^n \E_{i-1}\left[\overline{X}_i^2\right] \partial_if (\overline{X})^2 \right]^{1/2}.
\end{align*}
Using again that $\E_{i-1}\left[\overline{X}_i^2\right] = \E\left[\overline{X}_i^2 | \overline{X}_1,\ldots,\overline{X}_{i-1}\right]$, we get
\[
\E[f(\overline{X})g(X)] \leq \left(4\sqrt{3}+1\right)\E[g^2(X)]^{1/2}\E\left[\sum_{i=1}^n \E\left[\overline{X}_i^2 | \overline{X}_1,\ldots,\overline{X}_{i-1}\right] \left(\partial_i f(\overline{X})\right)^2\right]^{1/2}.
\]
Taking $g=f\circ R$ with $f$ such that $\int f\,d\bar{\mu}=0$, we obtain
\[
\E[f(\overline{X})^2] \leq \left(4\sqrt{3}+1\right)^2\E\left[\sum_{i=1}^n \E\left[\overline{X}_i^2 | \overline{X}_1,\ldots,\overline{X}_{i-1}\right] \left(\partial_i f(\overline{X})\right)^2\right].
\]
The inequality is then extended by density to all locally Lipschitz functions $f:\R^n\to \R$ such that $\int f^2\,d\mu<\infty.$
\endproof

It remains to prove Lemma \ref{HJ}.
\proof[Proof of Lemma \ref{HJ}]
Let $f:\R^n \to \R$ be a differentiable bounded Lipschitz function and denote by $M=1+\sup |f|$. 
For all $x\in \R^n$, we denote by $\|\,\cdot\,\|_x$ the quantity defined by
\[
\|u\|_x = \sqrt{\sum_{i=1}^n \lambda_i^2(S(x))u_i^2},\qquad \forall u\in \R^n.
\]
When $x$ is such that $\lambda_i(x)<\infty$ for all $i$, then $\|\,\cdot\,\|_x$ is a norm on $\R^n.$
With this notation $\tilde c_\mu(x,y) = \frac{1}{16} N(\|x-y\|_x).$
and 
\[
P_tf(x) = \sup_{y\in \R^n}\left\{ f(y) - \frac{1}{16t} N\left(\|y-x\|_x\right)   \right\}.
\]

First, note that, for all $x \in \R^n$, the supremum in the definition of $P_t f(x)$ is attained on the ball $\{y \in \R^n ; \|y-x\|_x \leq N^{-1}(48Mt)\}.$ Namely, if $y$ is outside the ball, it holds
\begin{align*}
f(y)-f(x)-\frac{1}{16t}N\left( \|y-x\|_x\right) &\leq -M<0.
\end{align*}
Since $P_tf(x)\geq f(x)$, we conclude that the supremum is reached inside the ball.

Now let us bound from above the derivative of $P_tf$ with resp  ect to the $t$ variable.
Let $x\in \R^n$ be such that $\lambda_i(x)<\infty$ for all $i$. Using the preceding remark and the inequality $uv\leq \frac{u^2}{2}+\frac{v^2}{2},$ we see that
\begin{align}
\frac{1}{t}(P_tf(x) - f(x))& = \sup_{\| y-x\|_x \leq N^{-1}(48Mt)}\left\{ \frac{f(y)-f(x)}{t} -\frac{1}{16t^2}N(\|y-x\|_x) \right\}\notag\\
& \leq 4\sup_{\| y-x\|_x \leq N^{-1}(48Mt)}\left\{ \frac{(f(y)-f(x))^2}{N(\|y-x\|_x)} \right\}\label{eq:lemme HJ}\\
& =4\sup_{ \| u\|_x \leq N^{-1}(48Mt)}\left\{ \frac{(\nabla f(x)\cdot u)^2 + o(\|u\|_x^2)}{ \frac{\|u\|_x^2}{2} +o(\|u\|_x^2)} \right\}\notag\\
&\leq  8\sup_{ \| u\|_x \leq N^{-1}(48Mt)}\left\{ \frac{\sum_{i=1}^n \frac{1}{\lambda_i^2(S(x))}(\partial_if)(x)^2 + o(1)}{ 1 +o(1)} \right\}\to 8\sum_{i=1}^n \frac{1}{\lambda_i^2(S(x))}(\partial_if)(x)^2,\notag
\end{align}
when $t$ goes to $0$, where the last inequality follows from Cauchy-Schwarz. So we conclude that if $x$ is such that $\lambda_i(x)<\infty$ for all $i$, then
\[
\limsup_{t\to 0} \frac{1}{t}(P_tf(x)-f(x)) \leq 8\sum_{i=1}^n \frac{1}{\lambda_i^2(S(x))}(\partial_if)(x)^2.
\]
If $x$ is such that $\lambda_i(x)=0$ for some $x$, then $P_tf(x) = f(x)$ and so the inequality above is still true.

Moreover, denoting by $\lambda^*(x) = \min \{\lambda_i(S(x))\}>0$, it follows from \eqref{eq:lemme HJ} and from the inequality $\|u\|_x \geq \lambda^*(x)|u|$, $u\in \R^n$, that for all $t\leq 1/(48M)$
\begin{equation}\label{Fatou}
\frac{1}{t}(P_tf(x) - f(x)) \leq  4 \sup_{\lambda^*(x)|y-x|\leq N^{-1}(1)} \left\{ \frac{(f(y)-f(x))^2}{N(\lambda^*(x) |y-x|)}\right\}\leq a\frac{L^2}{\lambda^*(x)^2},
\end{equation}
where $L$ the Lipschitz constant of $f$ and $a= 4\sup_{0<v\leq N^{-1}(1) } \frac{v^2}{N(v)}$ is some universal constant. 

Now, let $\nu$ be a probability measure on $\R^n$ such that $\int \frac{1}{\lambda_i^2(S(x))}\,\nu(dx)<+\infty$ for all $i$. Then $1/\lambda^*$ is also square integrable with respect to $\nu$. Therefore, thanks to \eqref{Fatou}, one can apply Fatou's lemma in its $\limsup$ form:
\[
\limsup_{t \to 0} \int \frac{1}{t}(P_tf -f)\,d\nu \leq \int \limsup_{t\to 0} \frac{1}{t}(P_tf - f)\,d\nu \leq 8\int \sum_{i=1}^n \frac{1}{\lambda_i^2(S(x))}(\partial_if)(x)^2\,d\nu.
\]
\endproof

\section{Variance estimates}
Here we prove Corollary \ref{cor:variance}, identity~\eqref{identity} and Corollary \ref{encadrement}.
\begin{proof}[Proof of Corollary \ref{cor:variance}]
According to Theorem \ref{main result} and standard properties of conditional expectations, it holds 
\begin{align*}
\Var(|\overline{X}|^2) &\leq 4a \sum_{i=1}^n  \E\left[ \E[\overline{X}_i^2|\overline{X_1},\ldots,\overline{X}_{i-1}] \overline{X}_i^2\right]
= 4a \sum_{i=1}^n  \E\left[ \E[\overline{X}_i^2|\overline{X_1},\ldots,\overline{X}_{i-1}]^2 \right]\\
&\leq 4a \sum_{i=1}^n  \E\left[ \E[\overline{X}_i^4|\overline{X_1},\ldots,\overline{X}_{i-1}] \right]
 = 4a \sum_{i=1}^n  \E\left[ \overline{X_i}^4 \right].
\end{align*}
Observe that $\E[\overline{X}_i^4] \leq 8 \E[X_i^4] + 8\E\left[ \E_{i-1}[X_i]^4\right] \leq 16 \E[X_i^4].$
We conclude using Borell's reverse H\"older inequality \cite{Bor74}: there exists some universal constant $a'$ such that for any log-concave random variable $Y$, it holds $\E[Y^4]\leq a' \E[Y^2]^2$.
So,
\[
\Var(|\overline{X}|^2) \leq 64aa'n.
\]
\end{proof}

\begin{rem}\label{rem BCE}
Our main result Theorem \ref{main result} is closely related to a result by Barthe and Cordero-Erausquin \cite{BCE13}. Namely, it follows from \cite[Theorem 4]{BCE13} that if $X$ is a random vector following a law $\mu(dx)=e^{-V(x)}\,dx$ on $\R^n$ with $\mathrm{Hess}\,V \geq \rho \mathrm{Id}$ for some $\rho \geq0$, then for all smooth function $f:\R^n \to \R$ such that
\begin{equation}\label{eq:BCE}
\E[\partial_if(X) | X_1\ldots,X_{i-1},X_{i+1},\ldots,X_n]=0,\qquad \forall i \in \{1,\ldots, n\}
\end{equation}
it holds
\[
\Var_\mu(f(X)) \leq \sum_{i=1}^n\E\left[  \frac{1}{\rho + 1/C_i(X)} \partial_i f^2(X)\right],
\]
where, for all $x=(x_1,\ldots,x_n)\in \R^n$, $C_i(x)$ denotes the Poincar\'e constant of the conditional distribution of $X_i$ knowing $X_1=x_1,\ldots, X_{i-1}=x_{i-1},X_{i+1}=x_{i+1},\ldots,X_n=x_n$. Note that the conclusion of \cite[Theorem 4]{BCE13} is more general than what we state above. In the general formulation, to any decomposition of the identity $\mathrm{Id} = \sum_{i=1}^m c_i P_{E_i}$ where $c_i>0$ and $P_{E_i}$ is the orthogonal projection on a subspace $E_i$ corresponds a weighted Poincar\'e inequality involving the Poincar\'e constants of the conditional distributions of $X$ knowing $P_{F_i}(X)$, with $F_i=E_i^{\bot}$.
\smallskip

It is well known (see for instance Theorem \ref{Bobkov} below) that Poincar\'e constants of one dimensional log-concave probability measures can be estimated by the variance. In particular, it holds
\begin{align*}
C_i(x) \leq 3 \Var(X_i |X_1=x_1,\ldots, X_{i-1}=x_{i-1},X_{i+1}=x_{i+1},\ldots,X_n=x_n),\qquad \forall i\in \{1,\ldots,n\}.
\end{align*}
Therefore, taking $\rho=0$, it holds
\begin{equation}\label{eq:BCE2}
\Var_\mu(f(X)) \leq 3\sum_{i=1}^n\E\left[ \Var(X_i| X_1\ldots,X_{i-1},X_{i+1},\ldots,X_n] \partial_i f^2(X)\right],
\end{equation}
for all smooth $f$ enjoying \ref{eq:BCE}. The difference between this result and Theorem \ref{main result} (besides the fact that we condition only with respect to the first variables) is that our result is true for all $f$ but for $\overline{X}$ instead of $X.$
\smallskip

Let us see how to recover the conclusion of Corollary \ref{cor:variance} from \eqref{eq:BCE2}. 
Let us assume that $X$ is such that 
\[
\E[X_i | X_1\ldots,X_{i-1},X_{i+1},\ldots,X_n]=0,\qquad \forall i\in \{1,\ldots,n\}.
\]
This condition (which is actually a bit stronger than the condition $\overline{X}=X$) exactly amounts to require that the function $f(x)=|x|^2$ satisfies \eqref{eq:BCE}. So applying \eqref{eq:BCE2} to this function and reasoning as in the proof of Corollary \ref{cor:variance} we thus get from \eqref{eq:BCE2} that $\Var(|X|^2) \leq a n$ for some universal constant $a.$ 
\end{rem}

\begin{proof}[Proof of Corollary \ref{encadrement}] Let us start with identity~\eqref{identity}.
For all $i\in \{1,\ldots,n\}$, it holds
\[
\E\left[\overline{X}_i\,\E_{i-1}[X_i]\right] = \E\Big[ (X_i-\E_{i-1}[X_i])\,\E_{i-1}[X_i] \Big]=\E\Big[\E_{i-1}[X_i-\E_{i-1}[X_i]]\ \E_{i-1}[X_i]\Big]=0.
\]
As a result $\overline{X}$ and $X'$ are orthogonal in $\mathbb{L}_2(\Omega,\mathcal{A},\P ; \R^n)$. 
Therefore, it holds 
\[
\E[|X|^2] = \E[|\overline{X}|^2] + \E[|X'|^2] \quad\text{and}\quad \E[|X|^2]^2 = \E[|\overline{X}|^2]^2 +2\E[|\overline{X}|^2]\E[|X'|^2] + \E[|X'|^2]^2.
\] 
Since
\begin{align*}
\E[|X|^4] &= \E[|\overline{X}|^4] + 2\E[|\overline{X}|^2]\E[|X'|^2] + \E[|X'|^4] + 4 \E[(\overline{X}\cdot X')^2] + 4\E[|\overline{X}|^2\overline{X}\cdot X'] + 4\E[|X'|^2\overline{X}\cdot X'],
\end{align*}
we get that
\[
\Var(|X|^2) = \Var(|\overline{X}|^2) + \Var(|X'|^2) + 2 \mathrm{Cov}(|\overline{X}|^2,|X'|^2) + 4 \E[(\overline{X}\cdot X')^2] + 4\E[|\overline{X}|^2\overline{X}\cdot X'] + 4\E[|X'|^2\overline{X}\cdot X']
\]
Using Cauchy-Schwarz, and the orthogonality of $\overline{X}$ and $X'$ we get
\begin{align*}
\left|\mathrm{Cov}\left(|\overline{X}|^2,|X'|^2\right)\right| &\leq \sqrt{\Var\left(|\overline{X}|^2\right)}\sqrt{\Var\left(|X'|^2\right)}\\
\left| \E\left[|\overline{X}|^2\,\overline{X}\cdot X'\right]\right|& = \left|\E\left[\left(|\overline{X}|^2-\E[|\overline{X}|^2\right)\, \overline{X}\cdot X'\right]\right| \leq \sqrt{\Var(|\overline{X}|^2)}\sqrt{\E[(\overline{X}\cdot X')^2]}\\
\left|\E\left[|X'|^2\,\overline{X}\cdot X'\right]\right|&=\left|\E\left[\left(|X'|^2-\E[|X'|^2\right)\, \overline{X}\cdot X'\right]\right| \leq \sqrt{\Var(|X'|^2)}\sqrt{\E[(\overline{X}\cdot X')^2]}.
\end{align*}
Moreover, note that if $i<j$ the random variable $ \overline{X}_i\E_{i-1}[X_i]\E_{j-1}[X_j]$ is measurable with respect to the $\sigma$ field generated by $X_1,\ldots,X_{j-1}$. Therefore 
\[
\E\left[ \overline{X}_i\E_{i-1}[X_i]\overline{X}_j\E_{j-1}[X_j] \right] = \E\left[ \overline{X}_i\E_{i-1}[X_i]\E_{j-1}[\overline{X}_j]\E_{j-1}[X_j] \right] = 0
\]
So, it holds
\begin{align*}
\E[(\overline{X}\cdot X')^2]& = \sum_{i=1}^n \E\left[\overline{X}_i^2\E_{i-1}[X_i]^2\right] + 2 \sum_{i<j}\E\left[ \overline{X}_i\E_{i-1}[X_i]\overline{X}_j\E_{j-1}[X_j] \right]\\
& = \sum_{i=1}^n \E\left[\overline{X}_i^2\E_{i-1}[X_i]^2\right] = \sum_{i=1}^n \E\left[ \E_{i-1}[X_i^2] \E_{i-1}[X_i]^2 - \E_{i-1}[X_i]^4\right]\\
& \leq \sum_{i=1}^n \E[X_i^4] \leq a'n,
\end{align*}
where $a'$ is some universal constant such that $\E[Y^4]\leq a' \E[Y^2]^2$ for all log-concave random variable $Y.$
We conclude from the inequalities above that
\begin{align*}
\Var(|X|^2) &\leq \left( \sqrt{\Var(|\overline{X}|^2)} + \sqrt{\Var(|X'|^2)}\right)^2 + 4a'n +4\sqrt{a'n}\left( \sqrt{\Var(|\overline{X}|^2)} + \sqrt{\Var(|X'|^2)}\right)\\
&=\left( \sqrt{\Var(|\overline{X}|^2)} + \sqrt{\Var(|X'|^2)}+2\sqrt{a'n}\right)^2\leq \left( \sqrt{\Var(|X'|^2)}+(2\sqrt{a'}+b)\sqrt{n}\right)^2\\
& \leq 2 \Var(|X'|^2) + 2(2\sqrt{a'}+b)^2n
\end{align*}
where in the last inequalities $b$ is the universal constant given by Corollary \ref{cor:variance}.

Similarly,
\begin{align*}
\Var(|X|^2) &\geq \left(\sqrt{\Var(|X'|^2)}-\sqrt{\Var(|\overline{X}|^2)} \right)^2 -4\sqrt{a'n}\left(\sqrt{\Var(|X'|^2)}+\sqrt{\Var(|\overline{X}|^2)} \right).
\end{align*}
Therefore, expanding the square, we see that the number $\sqrt{V'}:=\sqrt{\Var(|X'|^2)}$ is less than or equal the positive root of the equation
\[
x^2 - 2x\left(\sqrt{\bar{V}}+2\sqrt{a'n}\right) + \bar{V}-4\sqrt{a'n}\sqrt{\bar{V}}-V=0,
\]
with $V=\Var(|X|^2)$ and $\bar{V}=\Var(|\overline{X}|^2).$ An easy calculation thus gives
\[
\sqrt{V'} \leq \sqrt{\bar{V}} +2\sqrt{a'n} + \sqrt{4a'n +V},
\]
which together with Corollary \ref{cor:variance} easily gives the desired inequality.
\end{proof}

\bibliographystyle{amsplain}

\providecommand{\bysame}{\leavevmode\hbox to3em{\hrulefill}\thinspace}
\providecommand{\MR}{\relax\ifhmode\unskip\space\fi MR }
\providecommand{\MRhref}[2]{%
  \href{http://www.ams.org/mathscinet-getitem?mr=#1}{#2}
}
\providecommand{\href}[2]{#2}
\begin{thebibliography}{}

\end{thebibliography}


\begin{thebibliography}{10}

\bibitem{A-GB13}
David Alonso-Guti{\'e}rrez and Jes{\'u}s Bastero, \emph{The variance conjecture
  on some polytopes}, Asymptotic geometric analysis, Fields Inst. Commun.,
  vol.~68, Springer, New York, 2013, pp.~1--20. \MR{3076144}

\bibitem{ABP03}
Milla Anttila, Keith Ball, and Irini Perissinaki, \emph{The central limit
  problem for convex bodies}, Trans. Amer. Math. Soc. \textbf{355} (2003),
  no.~12, 4723--4735. \MR{1997580 (2005b:52010)}

\bibitem{Bar10}
Franck Barthe, \emph{Transportation techniques and {G}aussian inequalities},
  Optimal transportation, geometry and functional inequalities, CRM Series,
  vol.~11, Ed. Norm., Pisa, 2010, pp.~1--44. \MR{2649000 (2011k:49096)}

\bibitem{BCE13}
Franck Barthe and Dario Cordero-Erausquin, \emph{Invariances in variance
  estimates}, Proc. Lond. Math. Soc. (3) \textbf{106} (2013), no.~1, 33--64.
  \MR{3020738}

\bibitem{BK08}
Franck Barthe and Alexander~V. Kolesnikov, \emph{Mass transport and variants of
  the logarithmic {S}obolev inequality}, J. Geom. Anal. \textbf{18} (2008),
  no.~4, 921--979. \MR{2438906 (2009i:58048)}

\bibitem{Bob99}
Sergey~G. Bobkov, \emph{Isoperimetric and analytic inequalities for log-concave
  probability measures}, Ann. Probab. \textbf{27} (1999), no.~4, 1903--1921.
  \MR{1742893 (2001h:60026)}

\bibitem{Bob03}
\bysame, 
\emph{On concentration of distributions of random weighted sums},
 Ann. Probab. \textbf{31} (2003), no. 1, 195--215.


\bibitem{Bob07}
\bysame, \emph{Large deviations and isoperimetry over convex probability
  measures with heavy tails}, Electron. J. Probab. \textbf{12} (2007),
  1072--1100. \MR{2336600 (2008g:60066)}

\bibitem{BGRS13}
Sergey~G. Bobkov, Nathael Gozlan, Cyril Roberto, and Paul-Marie Samson,
  \emph{Bounds on the deficit in the logarithmic sobolev inequality}, Preprint,
  2013.

\bibitem{BH97}
Sergey~G. Bobkov and Christian Houdr{\'e}, \emph{Isoperimetric constants for
  product probability measures}, Ann. Probab. \textbf{25} (1997), no.~1,
  184--205. \MR{1428505 (98g:60032)}

\bibitem{Bor74}
Christer Borell, \emph{Convex measures on locally convex spaces}, Ark. Mat.
  \textbf{12} (1974), 239--252. \MR{0388475 (52 \#9311)}

\bibitem{Bor75}
\bysame, \emph{Convex set functions in {$d$}-space}, Period. Math. Hungar.
  \textbf{6} (1975), no.~2, 111--136. \MR{0404559 (53 \#8359)}

\bibitem{CE02}
Dario Cordero-Erausquin, \emph{Some applications of mass transport to
  {G}aussian-type inequalities}, Arch. Ration. Mech. Anal. \textbf{161} (2002),
  no.~3, 257--269. \MR{1894593 (2003h:49076)}

\bibitem{Eld13}
Ronen Eldan, \emph{Thin shell implies spectral gap up to polylog via a
  stochastic localization scheme}, Geom. Funct. Anal. \textbf{23} (2013),
  no.~2, 532--569.

\bibitem{EK}
Ronen Eldan and Bo'az Klartag, \emph{Dimensionality and the stability of the
  brunn-minkowski inequality}, To appear in Ann. Sc. Norm. Super. Pisa., 2013.

\bibitem{GL10}
Nathael Gozlan and Christian L{\'e}onard, \emph{Transport inequalities. {A}
  survey}, Markov Process. Related Fields \textbf{16} (2010), no.~4, 635--736.
  \MR{2895086}

\bibitem{GRS13}
Nathael Gozlan, Cyril Roberto, and Paul-Marie Samson, \emph{Characterization of
  {T}alagrand's transport-entropy inequalities in metric spaces}, Ann. Probab.
  \textbf{41} (2013), no.~5, 3112--3139.

\bibitem{GM11}
Olivier Gu{\'e}don and Emanuel Milman, \emph{Interpolating thin-shell and sharp
  large-deviation estimates for isotropic log-concave measures}, Geom. Funct.
  Anal. \textbf{21} (2011), no.~5, 1043--1068. \MR{2846382}

\bibitem{HUL93}
Jean-Baptiste Hiriart-Urruty and Claude Lemar{\'e}chal, \emph{Convex analysis
  and minimization algorithms. {II}}, Grundlehren der Mathematischen
  Wissenschaften [Fundamental Principles of Mathematical Sciences], vol. 306,
  Springer-Verlag, Berlin, 1993, Advanced theory and bundle methods.
  \MR{1295240 (95m:90002)}

\bibitem{KLS95}
Ravi Kannan, L{\'a}szl{\'o} Lov{\'a}sz, and Mikl{\'o}s Simonovits,
  \emph{Isoperimetric problems for convex bodies and a localization lemma},
  Discrete Comput. Geom. \textbf{13} (1995), no.~3-4, 541--559. \MR{1318794
  (96e:52018)}

\bibitem{Kla07}
Bo'az Klartag, \emph{A central limit theorem for convex sets}, Invent. Math.
  \textbf{168} (2007), no.~1, 91--131. \MR{2285748 (2008d:60034)}

\bibitem{Kla09}
\bysame, \emph{A {B}erry-{E}sseen type inequality for convex bodies with an
  unconditional basis}, Probab. Theory Related Fields \textbf{145} (2009),
  no.~1-2, 1--33. \MR{2520120 (2010h:60072)}

\bibitem{Kla10}
\bysame, \emph{High-dimensional distributions with convexity properties},
  European {C}ongress of {M}athematics, Eur. Math. Soc., Z\"urich, 2010,
  pp.~401--417. \MR{2648334 (2011d:46028)}

\bibitem{Klartag-cube}
\bysame, \emph{Concentration of measures supported on the cube}, To appear in
  Israel J. of Math., 2012.

\bibitem{Kla13}
\bysame, \emph{Poincar\'e inequalities and moment maps}, Ann. Fac. Sci.
  Toulouse Math. \textbf{22} (2013), no.~1, 1--41.

\bibitem{Kno57}
Herbert Knothe, \emph{Contributions to the theory of convex bodies}, Michigan
  Math. J. \textbf{4} (1957), 39--52. \MR{0083759 (18,757b)}

\bibitem{Led01}
Michel Ledoux, \emph{The concentration of measure phenomenon}, Mathematical
  Surveys and Monographs, vol.~89, American Mathematical Society, Providence,
  RI, 2001. \MR{MR1849347 (2003k:28019)}

\bibitem{Mar86}
Katalin Marton, \emph{A simple proof of the blowing-up lemma}, IEEE Trans.
  Inform. Theory \textbf{32} (1986), no.~3, 445--446. \MR{MR838213 (87e:94018)}

\bibitem{McC97}
Robert~J. McCann, \emph{A convexity principle for interacting gases}, Adv.
  Math. \textbf{128} (1997), no.~1, 153--179. \MR{1451422 (98e:82003)}

\bibitem{OV00}
Felix Otto and C{\'e}dric Villani, \emph{Generalization of an inequality by
  {T}alagrand and links with the logarithmic {S}obolev inequality}, J. Funct.
  Anal. \textbf{173} (2000), no.~2, 361--400. \MR{1760620}

\bibitem{RR98}
Svetlozar~T. Rachev and Ludger R{\"u}schendorf, \emph{Mass transportation
  problems. {V}ol. {I}}, Probability and its Applications (New York),
  Springer-Verlag, New York, 1998, Theory. \MR{1619170 (99k:28006)}

\bibitem{Tal96a}
Michel Talagrand, \emph{Transportation cost for {G}aussian and other product
  measures}, Geom. Funct. Anal. \textbf{6} (1996), no.~3, 587--600.
  \MR{MR1392331 (97d:60029)}

\bibitem{Vil09}
C{\'e}dric Villani, \emph{Optimal transport}, Grundlehren der Mathematischen
  Wissenschaften [Fundamental Principles of Mathematical Sciences], vol. 338,
  Springer-Verlag, Berlin, 2009, Old and new. \MR{2459454 (2010f:49001)}

\end{thebibliography}

\end{document}